\DeclareMathAlphabet{\pazocal}{OMS}{zplm}{m}{n}
\newcommand{\Xx}{\pazocal{X}}
\newcommand{\Yy}{\pazocal{Y}}
\newcommand{\quotes}[1]{``#1''}
\theoremstyle{plain}
\newtheorem{Theo}[subsubsection]{Theorem}
\newtheorem{fact}[subsubsection]{Fact} 
\newtheorem{Theor}{Theorem}
\newtheorem{Prop}[subsubsection]{Proposition}
\newtheorem{problem}[subsubsection]{Problem}
\newtheorem*{Theorem}{Theorem}
\newtheorem{cor}[subsubsection]{Corollary}
\newtheorem{Conj}{Conjecture}
\newtheorem{question}[subsubsection]{Question}
\theoremstyle{definition}
\newtheorem{Lemm}[subsubsection]{Lemma} 
\newtheorem{definition}[subsubsection]{Definition}
\theoremstyle{remark}
\newtheorem{notation}[subsubsection]{Notation}
\newtheorem{Obs}[subsubsection]{Observation}
\newtheorem{rem}[subsubsection]{Remark}
\newtheorem{remark}[subsubsection]{Remark}
\newtheorem{example}[subsubsection]{Example}
\newtheorem{ex}[subsection]{Exercise}
\newcommand{\nc}{\newcommand}
\nc{\bp}{\begin{Prop}}
\nc{\ep}{\end{Prop}}
\nc{\ssn}{\pagebreak \section}
\nc{\df}{{\bf Definition}\ }
\nc{\bl}{\begin{Lemm}}
\nc{\el}{\end{Lemm}}
\nc{\Pic}{\text{Pic}}
\nc{\bex}{\begin{ex} \rm}
\nc{\eex}{\end{ex}}
\nc{\bt}{\begin{Theo}}
\nc{\et}{\end{Theo}}
\nc{\bq}{\begin{question}}
\nc{\eq}{\end{question}}
\nc{\bc}{\begin{cor}}
\nc{\ec}{\end{cor}}
\nc{\bob}{\begin{Obs}}
\nc{\eob}{\end{Obs}}
\nc{\N}{\mathbb{N}}
\nc{\Q}{\mathbb{Q}}
\nc{\Z}{\mathbb{Z}}
\nc{\Ss}{{\mathbb{S}}}
\nc{\Cc}{{\mathbb{C}}}
\nc{\F}{{\mathbb{F}}}
\nc{\Oo}{\mathcal{O}}
\nc{\Qq}{\mathbb{Q}}
\nc{\ulim}{\text{ulim}\ }
\nc{\Hom}{\text{Hom}}
\nc{\Ext}{\text{Ext}}
\nc{\Tor}{\text{Tor}}
\nc{\Ob}{\text{Ob}}
\nc{\id}{\text{id}}
\nc{\ad}{\text{ad}}
\nc{\an}{\text{an}}
\nc{\rig}{\text{rig}}
\nc{\Spa}{\text{Spa}}
\nc{\ZR}{\text{ZR}}
\nc{\Hilb}{\text{Hilb}}
\nc{\supp}{\text{supp}}
\nc{\Spf}{\text{Spf}}
\renewcommand{\mkbegdispquote}[2]{\itshape}
 \nc{\Spec}{{\mathop{\operatorname{\rm Spec}}}}
  \nc{\spec}{{\mathop{\operatorname{\rm Spec}}}}
\title{On geometrically $C_1$ fields}
\author{Konstantinos Kartas}
\thanks{During this research, the author received funding from the European Union's Horizon 2020 research and innovation programme under the Marie Sk\l odowska-Curie grant agreement No 101034255\worldflag[width=1.5mm, length=1.5mm]{EU} and was also supported by the program GeoMod ANR-19-CE40-0022-01 (ANR-DFG)}
\begin{document}

\maketitle
\begin{abstract}
A field $k$ is called geometrically $C_1$ if every smooth projective separably rationally connected $k$-variety has a $k$-rational point. Given a henselian valued field of equal characteristic $0$ with divisible value group, we show that the property of being geometrically $C_1$ lifts from the residue field to the valued field. We also prove that algebraically maximal valued fields with divisible value group and finite residue field are geometrically $C_1$. In particular, any maximal totally ramified extension of a local field is geometrically $C_1$. 
%and the same is true for the Hahn series field $\F_q(\!(t^{\Q})\!)$.
%Let $(K,v)$ be a henselian valued field of equal characteristic $0$ with divisible value group. Then, $K$ is geometrically $C_1$ if and only if $k$ is geometrically $C_1$.
%\\
%(2) Prove that $\Q_p(p^{1/p^{\infty}})$ is $C_2$.
\end{abstract}

\setcounter{tocdepth}{1}
\tableofcontents
%\pagebreak
\section{Introduction}
%Deformation invariance of SRC; Question 3.9. — For a smooth, proper scheme over a DVR, if the
%generic fiber is separably rationally connected, is the closed fiber separably
%rationally connected? https://aif.centre-mersenne.org/item/10.5802/aif.3495.pdf
%Probably SRC is not deformation invariant: (see https://arxiv.org/pdf/math/0205148.pdf) Kollar assumes Let $X$ is a
%smooth proper morphism and that $X_k$ is SRC (probably assuming that the generic fiber is SRC is not enough). But maybe that is because $X_k$ is not guaranteed to be geometrically irreducible.
A field $k$ is called $C_i$ ($i\in \N$) if every non-constant homogeneous polynomial $f(X_0,...,X_n)\in k[X_0,...,X_n]$ of degree $d$ with $ d^i\leq n$ has a non-trivial zero over $k$. In geometric terms, we require that
every hypersurface $X\subseteq \mathbb{P}_k^n$ of degree $d$ satisfying $d^i \leq n $ has a $k$-rational point. 
Throughout the paper, we focus on $C_1$ fields, sometimes also referred to as quasi-algebraically closed.
%It has been observed that $C_1$ fields tend to admit rational points in more varieties than one might initially expect from their definition. 
It has been observed that $C_1$ fields tend to have rational points in many more varieties than just hypersurfaces $X\subseteq \mathbb{P}_k^n$ of degree $d\leq n$, especially varieties that are in some sense close to being rational. These include geometrically rational varieties, geometrically unirational varieties and more generally varieties which contain lots of rational curves, the so-called \textit{rationally connected varieties} (see IV, \S 3 \cite{kollarbook}):
\begin{definition}
%[Koll\'ar-Miyaoka-Mori]
%[Definition 3.2.3 \cite{kollarbook}]
%Koll\'ar-Miyaoka-Mori]
%\begin{enumerate}[label=(\roman*)]
%\item Let $k$ be an algebraically closed field. 
A $k$-variety $X$ is called rationally connected (resp. separably rationally connected) if there is a $k$-variety $B$ and a morphism $F:B\times \mathbb{P}^1\to X$ such that  the induced morphism
$$B\times \mathbb{P}^1\times \mathbb{P}^1 \to X\times X: (b,t,t')\mapsto (F(b,t),F(b,t'))$$
is dominant (resp. dominant and separable). 
%\item Let $k$ be an arbitrary field. A $k$-variety $X$ is called rationally connected if $X_{\overline{k}}$ is rationally connected. 
%\end{enumerate}
\end{definition}
In other words, there is an algebraic family of proper rational curves such that for almost any $(x,x')\in X\times X$, there is a curve in the family joining $x$ and $x'$. 
%For instance, it was shown that $C_1$ fields admit rational points on Severi-Brauer varieties 
%since $C_1$ fields have trivial Brauer group. It also includes 
%and certain rational surfaces by an early result of Manin. 
%In a letter to Grothendieck \cite{serregro}, Serre had tentatively suggested that perhaps every $\Oo$-acyclic variety (viz. $H^i(X,\Oo_X)=0$ for $i>0$) over a $C_1$ field $k$ 
%of cohomological dimension $1$ 
%has a $k$-rational point---although he immediately adds that such a strong statement is rather unlikely. 
%Later, he amended his statement by replacing cohomological dimension $1$ with the stronger assumption that $k$ is $C_1$. 
%Indeed, this was proven to be too optimistic since it already fails for some of the most common $C_1$ fields, such as function fields of curves over algebraically closed fields (see \cite{GHMS}). On the positive side, a lot of evidence gradually was accumulated over the years, which suggests that $C_1$ fields have rational points on \textit{rationally connected varieties}. [post list here?]
%
%have , which share many common properties with geometrically rational varieties but are in many regards better behaved:
%It may be convenient to look not just at hypersurfaces but at a more general class of varieties. 
\begin{definition}[Koll\'ar]
A field $k$ is called \textit{geometrically }$C_1$ if every smooth projective separably rationally connected $k$-variety has a $k$-rational point. 
\end{definition}
In \S \ref{c1fieldsandvariants}, we study systematically geometrically $C_1$ fields as well as other variants of $C_1$ fields. Each of these variants forms an elementary class which is $\forall \exists$-axiomatizable in the language of rings, similar to $C_1$ fields. This fact is used extensively throughout the paper.
%Other variants of the $C_1$ property are studied in \S \ref{c1fieldsandvariants}.
%https://webusers.imj-prg.fr/~leila.schneps/grothendieckcircle/Letters/GS.pdf pg. 172 
%In SGA5 pg. 134 Serre corrects it by replacing cd 1 with C_1 https://wstein.org/sga/circle/SGA5.pdf
Geometrically $C_1$ fields of characteristic $0$ are $C_1$ by a theorem of Hogadi-Xu \cite{hogadi}. Conversely, the Lang-Manin conjecture (or $C_1$-conjecture) predicts that every $C_1$ field is geometrically $C_1$. It has been verified for several $C_1$ fields, see \cite{esnault2,grabberstarr,dejongstarr}. We refer the reader to a recent survey article of Esnault \cite{esnaultc1} on this problem. 
%It is not known if discrete henselian valued fields of mixed characteristic with algebraically closed residue field are geometrically $C_1$.  The only known proof in equal characteristic passes through the function field case and does not seem to shed any light on the mixed characteristic case. 
%In general, it is natural to look for algebraic extensions of $\Q_p$ which are geometrically $C_1$. Such extensions must necessarily be infinite since finite extensions of $\Q_p$ even fail to be $C_i$, for any $i\in \N$ (see \cite{Terj2,arkhipov,alemu}). In this paper, we obtain some of the first few natural examples. 

We propose the following transfer principle which says that, for certain valued fields, the property of being \quotes{geometrically $C_1$} lifts from the residue field to the valued field (and vice versa):
\begin{Conj}
Let $(K,v)$ be an algebraically maximal valued field with divisible value group and perfect residue field $k$. Then: 
$$K\mbox{ is geometrically }C_1\iff k \mbox{ is geometrically }C_1 $$
\end{Conj}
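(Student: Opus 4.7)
The conjecture is an equivalence, so there are two directions to address, and the main technical lever in both is the $\forall\exists$-axiomatizability of being \quotes{geometrically $C_1$} in the language of rings, together with an Ax--Kochen/Ershov-style transfer principle for algebraically maximal valued fields with divisible value group. By Kaplansky's work, such $(K,v)$ are --- at least in equal characteristic zero --- model-theoretically controlled by the pair (residue field, value group), so one can transfer $\forall\exists$-statements between $K$ and a concrete model such as a generalized power series field $k(\!(t^{\Gamma})\!)$, where geometric techniques like resolution of singularities are available.

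\textbf{Forward direction.} Given a smooth projective separably rationally connected variety $X/k$, I would consider the base change $X_K = X \times_k K$, which inherits all three properties. The hypothesis on $K$ yields a $K$-rational point $\Spec K \to X_K$, which composes to a morphism $\Spec K \to X$ over $k$. Properness of $X/k$ and the valuative criterion then extend this uniquely to $\Spec \Oo_v \to X$, and reducing modulo $\mathfrak{m}_v$ produces a $k$-point of $X$. In equal characteristic zero a section $k \hookrightarrow \Oo_v$ streamlines the argument, but the specialization step itself goes through whenever $X$ is proper.

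\textbf{Backward direction.} The substantive half is to promote a $k$-point on the special fiber of a good model to a $K$-point on $X$. Given smooth projective separably rationally connected $X/K$, the plan is: (i) spread $X$ out over a finitely generated sub-$\Oo_v$-algebra to obtain a proper flat model $\mathcal{X} \to \Spec \Oo_v$; (ii) apply resolution or alteration techniques (in equal characteristic zero) to make $\mathcal{X}$ smooth with smooth special fiber $\mathcal{X}_k$; (iii) invoke specialization of separable rational connectedness in smooth proper families (Koll\'ar--Miyaoka--Mori, de Jong--Starr) to transfer the property to $\mathcal{X}_k$; (iv) use the hypothesis on $k$ to obtain $\mathcal{X}_k(k) \neq \emptyset$; and finally (v) use smoothness plus henselianity to lift a smooth $k$-point to an $\Oo_v$-point of $\mathcal{X}$, whose generic fiber lies in $X(K)$.

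\textbf{Main obstacle.} The hardest step is (ii): producing a smooth proper model with a smooth special fiber that still carries separable rational connectedness. In equal characteristic zero with $\Oo_v$ Noetherian this is essentially Hironaka, but here $\Oo_v$ is a henselian valuation ring with possibly enormous value group, so one is forced either to descend to a Noetherian situation via spreading out, or to exploit the model-theoretic transfer to reduce to $k(\!(t^{\Gamma})\!)$ where such constructions can be performed cleanly. In mixed (or positive residue) characteristic --- included in the conjecture but not in the paper's main theorem --- the situation is considerably worse: wild ramification obstructs smooth models, resolution is open in general, and specialization of separable rational connectedness across differing residue characteristics is not available in the generality required. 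I expect the full conjecture in the mixed characteristic case to require substantially new geometric input beyond the transfer principle sketched here, which is presumably why the paper handles the finite residue field case by an entirely different route.
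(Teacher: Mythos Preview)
This statement is a \emph{conjecture} in the paper; there is no proof to compare against. The paper only establishes the equal characteristic~$0$ case (Theorem~\ref{equal0}) and the case where $k$ is algebraic over a finite field (Theorem~\ref{hendefdiv1}). So the question is whether your sketch closes the gap, and it does not --- in either direction.

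\textbf{Forward direction.} Your sentence ``consider the base change $X_K = X \times_k K$'' presupposes a ring map $k \to K$, which does not exist in mixed characteristic. The paper's Lemma~\ref{easyprops2}(iii) proves this direction only in \emph{equal characteristic}, and even there it does not embed $k$ into $K$: it lifts a finitely generated field of definition $k_0 \subseteq k$ to $\Oo_K$ (possible because $k_0$ has a separating transcendence basis and $K$ is henselian), then takes the trivial deformation over $\Oo_K$ and specializes. Your valuative-criterion step is fine once you have a model, but producing that model in mixed characteristic is already part of the open problem.

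\textbf{Backward direction.} Step~(ii) is too optimistic even in equal characteristic~$0$: Hironaka gives you a regular model whose special fiber is a strict normal crossings divisor, not a smooth variety, so step~(iii) (specialization of SRC in smooth proper families) does not apply. The paper's second proof of Theorem~\ref{equal0} handles this by passing to a semistable model after a totally ramified base change and then invoking Hogadi--Xu to find a rationally connected component; this is genuinely more than your outline provides.

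More importantly, the paper's general machinery (Theorem~\ref{criterion}, Theorem~\ref{mainthm1}) takes a route you have not identified: rather than seeking one good model, it shows that for tame $(K,v)$ of rank~$1$ with divisible value group, $X(K)\neq\emptyset$ holds provided \emph{every} $\Oo_K$-model has a $k$-point on its special fiber. This is proved via the adic space $X^{\ad}$ as an inverse limit of formal models, a compactness argument to produce a point with residue field $k'$ satisfying $k\preceq_\exists k'$, and then Kuhlmann's existential AKE to conclude $K\preceq_\exists \kappa(\xi)$. The burden then shifts from ``find a smooth model'' to ``$k$ has points in all \emph{degenerations} of SRC varieties'' --- i.e., $k$ is RC solving. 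In characteristic~$0$ this follows from geometrically~$C_1$ by Hogadi--Xu; for finite fields it is Esnault and Esnault--Xu. Whether geometrically~$C_1$ implies RC solving in positive characteristic is exactly what is missing for the full conjecture, and your sketch does not address it.
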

In this paper, which is the first in a series, we provide some evidence for such a statement. It is also an interesting question whether a similar transfer principle holds for $C_1$ (or even $C_i$). 
%The original motivation for this work w
We note that, by the main result of \cite{JK}, such statements imply analogous transfer principles between a perfectoid field and its tilt. This is the original motivation for our work and will be the subject of the sequel.
%Assuming the Lang-Manin conjecture, this should indeed be the case.
\subsection{Main results}
The situation is rather clear in equal characteristic $0$:
%The transfer of \quotes{geometrically $C_1$} holds 
\bt \label{equal0}
 Let $(K,v)$ be a henselian valued field of equal characteristic $0$ with divisible value group and residue field $k$. Then:
 $$K\mbox{ is geometrically }C_1\iff k \mbox{ is geometrically }C_1 $$
\et 
%The key ingredient is the tHogadi-Xu \cite{hogadi}
%Suppose $K=\bigcup_{n\in \N} k(\!(t^{1/n})\!)$ is the Puiseux series field over $k$. The general case can be reduced to the above by Ax-Kochen/Ershov in equal characteristic $0$. The key ingredient is a result of Hogadi-Xu \cite{hogadi}, 
%the existence of semistable models and the theorem of  without any reference to valuation theory. For such a reduction, it is crucial that geometrically $C_1$ fields form an elementary class (see \S \ref{c1fieldsandvariants}), a fact that is systematically used  throughout the paper and which is largely implicit in \cite{duesler}. 
For example, if $k$ is a geometrically $C_1$ field of characteristic $0$, then the Puiseux series field $K=\bigcup_{n\in \N} k(\!(t^{1/n})\!)$ is geometrically $C_1$. 
%$k$ is a , then 
%using the theorem of Graber-Harris-Starr \cite{grabberstarr}, which says that function fields of curves over algebraically closed fields of characteristic $0$ are geometrically $C_1$, one gets the following: 
%\bc 
%Let $k$ be an algebraically closed field of characteristic $0$. The Puiseux series field $\bigcup_{n\in \N} k(z) (\!(t^{1/n})\!)$ is geometrically $C_1$ and also $C_1$.
%\ec
We stress however that Theorem \ref{equal0} is also valid for valued fields of arbitrary (possibly infinite) rank.

%This relies on , which says that function fields of curves over algebraically closed fields of characteristic $0$ are geometrically $C_1$.
In mixed and positive characteristics, we were only able to verify such a transfer principle for specific geometrically $C_1$ residue fields $k$. In this paper, we deal with the case where $k$ is a finite field (or algebraic over a finite field):
\bt  \label{hendefdiv1}
Let $(K,v)$ be an algebraically maximal valued field with divisible value group. Suppose that $k$ is algebraic over a finite field. Then $K$ is geometrically $C_1$ and also $C_1$.
\et   
We present two applications. Even the fact that the fields below are $C_1$ appears to be a new observation.
%We prove that any maximal totally ramified extension of $\Q_p$ is geometrically $C_1$. More generally:
\bc  \label{maxtot}
Any maximal totally ramified extension of a non-archimedean local field is geometrically $C_1$ and also $C_1$.
\ec  
This is, in a way, complementary to Lang's theorem \cite{langc1} which says that the maximal unramified extension of any non-archimedean local field is $C_1$. We note that it is a prominent open problem whether $\Q_p^{ur}$ is geometrically $C_1$; see \cite{esnaultindex,duesler,pieropan} for partial results in this direction. 

I was later informed by O. Wittenberg that he knew how to prove a weaker form of Corollary \ref{maxtot} by using de Jong's alterations as in \cite{witt}.
%, using the equivariant version of de Jong's theorem on alterations (used just
%  as it is used in [Wit15] immediately after the proof of Lemme 5.3, 
%. 
Namely, he was able to show that every smooth projective rationally connected variety over a local field has a rational point in a finite totally ramified extension. In contrast, Corollary \ref{maxtot} implies the existence of a rational point in a finite totally ramified extension which is contained within an a priori chosen maximal totally ramified extension.
%This is a weaker version because it does not control the totally ramified extension

Our second application is the following:
\bc 
Let $\F_q$ be a finite field and $\Gamma$ be a divisible ordered abelian group. Then the Hahn series field $\F_q(\!(t^{\Gamma})\!)$ is geometrically $C_1$ and also $C_1$.
\ec
One cannot replace Hahn series with Puiseux series in the above statement---it is crucial to work with algebraically maximal rather than henselian fields when the residue characteristic is positive. This is somewhat reminiscent of the fact that the field of Puiseux series over an algebraically closed field of positive characteristic fails to be algebraically closed due to the presence of Artin-Schreier extensions.

%
%\subsection{Rational points over tame fields}
%Given a valued field $(K,v)$ of rank $1$ and a proper $K$-variety $X$, an $\Oo_K$-model of $X$ is a flat, proper $\Oo_K$-scheme  of finite presentation whose generic fiber is $X$. 
%If $X(K)\neq \emptyset$ and $\Xx$ is any $\Oo_K$-model of $X$, then $\Xx(k)\neq \emptyset$ by the valuative criterion of properness. We prove a converse if the valuation is sufficiently nice:
\subsection{Elements in the proofs}
The main steps in the proofs of Theorems \ref{equal0} and \ref{hendefdiv1} are as follows: 
\begin{enumerate}
\item We first prove that, for certain valued fields $(K,v)$, given a $K$-variety $X$, we have $X(K)\neq \emptyset$ if and only if $\Xx(k)\neq \emptyset$ for each $\Oo_K$-model $\Xx$ of $X$. This reduces our task of showing that $K$ is geometrically $C_1$ to the task of finding $k$-points on degenerations of rationally connected varieties. 

\item It is a general principle that geometrically $C_1$ fields also tend to have rational points in degenerations of rationally connected varieties. This is true both for fields of characteristic $0$ and for finite fields, which are the residue fields under consideration here. 

\item From the above, it follows that $K$ is geometrically $C_1$. In case $K$ has positive characteristic, some extra work is needed to show that $K$ is $C_1$.  
\end{enumerate}
We elaborate more on these steps below.
%degenerations of rationally connected varieties over the residue field.
\subsubsection{Rational points over tame fields with divisible value group}
%We will the following criterion for the existence of a $K$-point, which 
 
\bt \label{criterion}
Let $(K,v)$ be an algebraically maximal valued field with divisible value group of rank $1$ and perfect residue field $k$. Let $X$ be a proper $K$-variety. Suppose that $\Xx(k)\neq \emptyset$, for each $\Oo_K$-model $\Xx$ of $X$. Then $X(K)\neq \emptyset$.
%\begin{enumerate}[label=(\roman*)] 
%\item 
%\item 
%\end{enumerate}
\et  
%In equal characteristic $0$, there is a purely geometric proof using the existence of a strict semi-stable $\Oo_K$-model $\Xx$ for $X$. 
Note that this statement is clear in favorable situations, e.g., if some $\Oo_K$-model $\Xx$ is smooth. We would then get that $X(K)\neq \emptyset$ by Hensel's Lemma. Unfortunately, in mixed and positive characteristics, we cannot really control the singularities of $\Oo_K$-schemes---except in very small dimensions. Nevertheless, there are certain results in the model theory of valued fields which are known in arbitrary characteristic and can be utilized in this situation. Proposition \ref{criterion} is essentially a geometric version of an Ax-Kochen/Ershov principle for existential closedness for tame fields
%existential  Ax-Kochen/Ershov principles 
due to F.-V. Kuhlmann \cite{Kuhl}.
%There are  type results, due to , which suffice for the existence of a $K$-point on $X$. 
In order to deduce the above geometric version from Kuhlmann's result, we first need to shift from $\Oo_K$-models to valuations: The $\Oo_K$-models of $X$  form an inverse system and the inverse limit---more precisely, the inverse limit of their formal completions---is isomorphic to a space of valuations on $X$, namely the adic space $X^{\ad}$. By a compactness argument, this inverse limit admits a $k^*$-point, for any sufficiently saturated elementary extension $k^*$ of $k$. This yields
%, namely 
%$$\varprojlim \Xx (k) \neq \emptyset $$ 
%A $k$-rational point of $X^{\ad}$
% a $k$-rational point in the adic space $X^{\ad}$ associated to $X$, hence 
a scheme-theoretic point $\xi\in X$ and a valuation $v_{\xi}$ on $\kappa(\xi)$ whose residue field $k'$ embeds in $k^*$ over $k$. In particular, we get that $k$ is existentially closed in $k'$. Since the value group of $K$ is divisible, it is existentially closed in the one of $v_{\xi}$, even though the latter will typically have higher rank. By Kuhlmann's Ax-Kochen/Ershov principle for existential closedness, one gets that $K\preceq_{\exists} \kappa(\xi)$. In particular, $X(K)\neq \emptyset$.  We spell out this argument in more detail in \S \ref{adicspaces}.
\subsubsection{Degenerations of rationally connected varieties}
Over a field of characteristic $0$, Hogadi-Xu \cite{hogadi} showed that a degeneration of a rationally connected variety always contains a rationally connected subvariety. It is not hard then to deduce our transfer principle in equal characteristic $0$ from  Theorem \ref{criterion}. Alternatively, at least in the case $K=\bigcup_{n\in \N}k(\!(t^{1/n})\!)$, one can use the existence of semistable models after ramified base change. We refer the reader to \S \ref{equal0sec}.
%in equal characteristic $0$.
%The general case can be reduced to the above by Ax-Kochen/Ershov in equal characteristic $0$. 

In mixed and positive characteristics, it is not known if degenerations of (separably) rationally connected varieties always contain a (separably) rationally connected subvariety, although there are statements in the literature predicting such phenomena (see Suggestions 7.9 \cite{colliot}).  Nevertheless, for several fields which were previously known to be geometrically $C_1$, it has been shown  that they also admit rational points in degenerations of separably rationally connected varieties. Starr \cite{starr2} calls such fields \textit{RC solving}. For instance, finite fields are RC solving by a theorem of Esnault \cite{esnault2} and Esnault-Xu \cite{esnaultxu}. It is not difficult now to deduce Theorem \ref{hendefdiv1}. We first reduce to the case where $\Oo_K$ is a direct limit of DVRs (in particular rank $1$), using results of Kuhlmann. Then, we conclude using Theorem \ref{criterion}. 
\subsubsection{From geometrically $C_1$ to $C_1$}
%Finally, we explain how to prove that $K$ is $C_1$. 
%Finally, we explain how to deduce the $C_1$ 
%Note that this is not immediately obvious because a singular surface 
If $K$ is geometrically $C_1$ of characteristic $0$, then it is also $C_1$ by Hogadi-Xu \cite{hogadi}. In positive characteristic, it is not known in general if geometrically $C_1$ fields are $C_1$. Fortunately, there is a trick to prove this for fields $K$ as in Theorem \ref{hendefdiv1}. First, note that we can deform any hypersurface $X \subseteq \mathbb{P}^n_K$ of degree $d\leq n$ to a general hypersurface $Y$ of the same degree over $K(\!(t)\!)$. Then $Y$ is separably rationally connected by a result of Zhu \cite{zhu}. Now the key observation is that $L=\bigcup_{n\in \N} K(\!(t^{1/n})\!)$ is geometrically $C_1$ since it is an elementary extension of $K$ by yet another application of Kuhlmann's theory. It follows that $Y$ has an $L$-point. This, in turn, specializes to a $K$-point of $X$ by the valuative criterion of properness. We conclude that $K$ is $C_1$.

%Next, we find some geometrically $C_1$ fields
%, as needed.
%The proof uses a deformation theoretic argument and the fact that $K(\!(t^{\Q})\!)$ is geometrically $C_1$, the latter follows elementary equivalent to $K$ by Kuhlmann
%so some additional work is needed. 
%The key point is that $K\equiv K(\!(t^{\Q})\!)$ in $L_{\text{rings}}$, so that $K$  
\section{Ultralimits of RC varieties}
We will need a fact due to Duesler-Knecht, which roughly says that (separable) rational connectedness behaves well with ultralimits. This statement appears in the first arXiv version \cite{duesler1} but not in the published version \cite{duesler}. We present a slightly stronger version and recall some useful terminology.
%also . 
%Our proof utilizes the uniform definability of Zariski closures in ACF.  
%and utilizes the uniform definability of algebraic closures in ACF. 
%\bp  [cf. Proposition 17 \cite{duesler1}]\label{limitofRC}
%Let $I$ be an index set, $U$ be an ultrafilter on $I$ and $k_i$ be a field for each $i\in I$. Let $k=\prod_{i\in I} k_i/ U$ and $X$ be a projective separably rationally connected $k$-variety. Let $(X_i)_{i\in I}$ be a sequence of $k_i$-schemes with $\ulim X_i=X$. Then the following are equivalent: 
%\begin{enumerate}[label=(\roman*)]
%\item $X$ is a separably rationally connected variety. 

%\item $X_i$ is a separably rationally connected variety for almost all $i$. 

%\end{enumerate}

%\ep  

%saying that rational connectedness behaves well with ultraproducts
%Given a field extension $K/k$, we write $X_{K}=X\times_k K$.
\subsection{Varieties over ultraproducts}
By convention, a \textit{variety} over a field $k$ is a reduced, separated $k$-scheme of finite type.

Let $I$ be an index set, $U$ be an ultrafilter on $I$ and $k=\prod_{i\in I} k_i/ U$ be an ultraproduct of fields. 
\begin{notation}[\S 2 \cite{duesler}]
Let $X\subseteq \mathbb{P}^n_k$ be a closed subscheme  with vanishing homogeneous ideal $I=(f_1,...,f_n)$ where each $f_j(X_0,...,X_n)\in k[X_0,...,X_n]$ is a homogeneous polynomial. For each $i\in I$, let $f_k^{(i)}(X_0,...,X_n)\in k_i[X_0,...,X_n]$ be such that $\ulim f_k^{(i)}=f_k$ and let $X_i \subseteq \mathbb{P}^n_{k_i}$ be the closed subscheme defined by $I^{(i)}$. We write $\ulim X_i=X$ and call $X$ the ultralimit of the $X_i$. 
\end{notation}
A similar definition works also for quasi-projective $k$-schemes. We introduce an analogous notation for morphisms:
\begin{notation}
%[cf. \S 2.4 \cite{Schoutens}]
Let $X\subseteq \mathbb{P}^n_k$ and $Y\subseteq \mathbb{P}^m_k$ be projective schemes and $f:X\to Y$ be a morphism. This gives rise to a (non-unique) sequence $(f_i)_{i\in I}$, where $f_i:\mathbb{P}^n_{k_i}\to \mathbb{P}^m_{k_i}$ is a morphism and $f_i(X_i)\subseteq Y_i$ for almost all $i$. We write $f=\ulim f_i$.
\end{notation}
%Beware that the $X_i$'s are not unique. 
A typical phenomenon with ultralimits is that a given geometric property holds for $X$ if and only if it holds for $X_i$ for \textit{almost all} $i\in I$ (in the sense of the ultrafilter):
\bl \label{ultraprodvar}
Let $I$ be an index set, $U$ be an ultrafilter on $I$ and $k=\prod_{i\in I} k_i/ U$. Let $X$ be a quasi-projective $k$-scheme and suppose that $X=\ulim X_i$. Then: 
\begin{enumerate}[label=(\roman*)]
\item $X$ is reduced (resp. irreducible) if and only if $X_i$ is reduced (resp. irreducible) for almost all $i\in I$.
\item $X$ is smooth if and only if $X_i$ is smooth for almost all $i\in I$.
% In particular, if $X$ is a $k$-variety, then $X_i$ is a $k_i$-variety for almost all $i\in I$. 
%In particular, $X$ is a variety if and only if $X_i$ is a variety for almost all $i$.
\end{enumerate}
\el 
\begin{proof}
%These properties can be checked locally, so assume that $X$ is affine.
For part (i), see \S 2.4.10 \cite{Schoutens}. Part (ii) follows from the Jacobian criterion for smoothness and \L o\'s' theorem. 
\end{proof}
Similarly, if $f=\ulim f_i$, then typically a given geometric property will hold for $f$ if and only if it holds for $f_i$ for \textit{almost all} $i\in I$:
%\bl [Corollary 10.1.6 \cite{johnson}] \label{definabofclosures}
%Let $X_z$ be a definable family of subsets of $\mathbb{P}^n$. Then, the family $\overline{X_z}$ of Zariski closures is definable. That is, given a formula $\phi(x,z)$, there exists a formula $\psi(x,z)$ such that for every $K \models \text{ACF}$ and every $a\in K^m$, the Zariski closure of $X_a(K)=\phi(K,a)$ is equal to $\overline{X_a}(K)=\psi(K,a)$.
%\el 
%\bc \label{definabofclosures}
%Let $X_z \subseteq \mathbb{P}^n_z$ and $Y_z\subseteq \mathbb{P}^m_z$ be definable families (where $z$ is a tuple) in ACF and $F_z:\mathbb{P}^n_z\to \mathbb{P}^m_z$ be a definable family of maps. Then, the set 
%$$D=\{z: \overline{F_z(X_z)}=Y_z\}$$
%is definable in ACF.
%\ec
%For lack of reference, we include a proof of the following:
\bl \label{ultradominant}
Let $I$ be an index set, $U$ be an ultrafilter on $I$ and $k=\prod_{i\in I} k_i/ U$. Let $X$ and $Y$ be quasi-projective $k$-schemes and $f:X\to Y $ be a morphism.
Let $X=\ulim X_i$ and $Y=\ulim Y_i$ and $f=\ulim f_i$. Then: 
\begin{enumerate}[label=(\roman*)]
%\item $\overline{\ulim X_i}=\ulim \overline{X_i}$.
\item $f$ is dominant if and only if $f_i$ is dominant for almost all $i\in I$.
\item Let $K_i$ be a field extension of $k_i$ and $x_i\in X(K_i)$. Let $K=\prod_{i\in I} K_i/U$ and $x=\ulim x_i \in X(K)$. Then $f$ is smooth at $x$ if and only if $f_i$ is smooth at $x_i$ for almost all $i\in I$.
\item Assume that $X$ and $Y$ are integral $k$-varieties and $f$ is dominant. Then $f$ is separable if and only if $f_i$ is separable for almost all $i\in I$.
\end{enumerate}
\el 
\begin{proof}
%It is clear that $\ulim \overline{X_i}$ is a closed subset containing $X=\ulim X_i$, hence $\overline{\ulim X_i} \subseteq \ulim \overline{X_i}$.
%It is convenient to switch to Weil's language of algebraic geometry: 
%Let $K_i$ be an algebraically closed field containing $k_i$ and $K=\prod_{i\in I} K_i/ U$ which is an algebraically closed field containing $k$. \\
(i) %Switching to Weil's language of algebraic geometry, it suffices to show that $f:X(K)\to Y(K)$ is dominant if and only if $f_i:X_i(K_i)\to Y_i(K_i)$ is dominant for almost all $i$. Let $X\subseteq \mathbb{P}^n_k$ and $Y\subseteq \mathbb{P}^m_k$. 
This is essentially a corollary of Johnson's uniform definability of Zariski closures in ACF (see \S 10 \cite{johnson}). Working in ACF, there is a definable family $X_{z}$ (resp. $Y_{z}$) of subsets of $\mathbb{P}^n$ (resp. $\mathbb{P}^m$), a definable family of maps $F_{z}:\mathbb{P}^n\to \mathbb{P}^m$ and parameters $a_i\in k_i^r$ and $a\in k^r$ such that $X_a=X$ and $X_{a_i}=X_i$ (resp. $Y_a=Y$ and $Y_{a_i}=Y_i$) and $F_a=f$ (resp. $F_{a_i}=f_i$). Strictly speaking, an \quotes{equality} such as $X_a=X$ should be interpreted as saying that $X_a(K)=X(K)$ for any algebraically closed field $K$ containing $k$ (similarly for the other equalities).
%the defining equations for $X$ are given by the formula defining $X_a$.
Note that $F_z(X_z)$ is a definable family of subsets of $\mathbb{P}^m$. The same is true for the family of Zariski closures $\overline{F_z(X_z)}$ by Corollary 10.1.6 \cite{johnson}. Therefore, the set 
$$D=\{z: \overline{F_z(X_z)}=Y_z\}$$ 
is definable. By \L o\'s' theorem, we get that $a\in D$ if and only if $a_i\in D$ for almost all $i\in I$. We conclude that $f$ is dominant if and only if $f_i$ is dominant for almost all $i\in I$. \\
%We view $X$ as the fiber of a uniformly definable family $\Xx\subseteq \mathbb{P}^n$ in ACF. 
%$$D=\{a : \overline{\Xx_x}=Y\} $$
%This is a definable set and $a\in D$. By \L o\'s, we get that $a_i \in D$ for almost all $i\in I$.\\
%By Chevalley's theorem, the image $f(X)\subseteq Y$ is constructible, hence $f(X)$ contains a Zariski dense open subset $U\subseteq Y$. Let $U=\ulim U_i$. By \L o\'s , the image $f_i(X_i)$ contains $U_i$, for almost all $i$. It follows that $f_i$ is dominant for almost all $i\in I$.\\
%Let $f_i^*:k_i[Y_i]\to k_i[X_i]$ be the morphism of $k_i$-algebras induced from $f_i$. We also have an induced morphism of $k$-algebras $\ulim f_i^*:\ulim k[Y_i]\to \ulim k[X_i]$. 
%We have $k[X] \subseteq \ulim k_i[X_i]$ and $k[Y] \subseteq \ulim k_i[Y_i]$. 
%\[
  %\begin{tikzcd}
    %\ulim k_i[Y_i]  \arrow[r] & \ulim k_i[X_i] \\
   %[Y]  \arrow[u] \arrow[r, "f^*"] & k[X] \arrow[u] 
    % \end{tikzcd}
%\]
%We have that $f:X\to Y$ is dominant if and only if $f^*:k[Y]\to k[X]$ is  injective. \\
(ii) We use the infinitesimal lifting criterion for smoothness (see Lemma 02H6 \cite{sp}). Suppose that $f_i:X_i\to Y_i$ is smooth for almost all $i$. Suppose we have a commutative diagram 
\[
  \begin{tikzcd}
      \Spec (K)  \arrow[d] \arrow[r, "x"] & X \arrow[d,"f"]\\
  \Spec(K[t]/t^2)   \arrow[r,"g"] & Y
     \end{tikzcd}
\]
For almost all $i$, this gives rise to a commutative diagram 
\[
  \begin{tikzcd}
      \Spec (K_i)  \arrow[d] \arrow[r,"x_i"] & X_i \arrow[d, "f_i"]\\
  \Spec(K_i[t]/t^2)   \arrow[r, "g_i"] \arrow[ur, dotted, "\exists P_i "]  & Y_i
     \end{tikzcd}
\]
Since $f_i$ is smooth at $x_i$, there is $P_i:\Spec(K_i[t]/t^2)\to X_i$ lifting $g_i$ as above. Now $P=\ulim P_i$ gives a well-defined morphism $P:\Spec(K[t]/t^2) \to X $ lifting $g$. 

Conversely, suppose that $f$ is smooth at $x$ and that $f_i$ is not smooth at $x_i$ for almost all $i$. Then there exists $g_i:  \Spec(K_i[t]/t^2) \to Y_i$ which does not lift to $X_i$. Note that $g=\ulim g_i$ gives a well-defined morphism. Since $f$ is smooth, we get that $g$ lifts to $P:\Spec(K[t]/t^2)\to X$. Write $P=\ulim P_i$ and note that $P_i$ gives a lift of $g_i$ for almost all $i$, a contradiction. \\
%$\Spec(K[t]/t^2)\to X$. \\
(iii) For almost all $i$, $X_i$ and $Y_i$ are integral varieties by Lemma \ref{ultraprodvar}(i) and $f_i$ is dominant for almost all $i$ by (i). Recall that a dominant morphism of integral varieties is separable if and only if it is generically smooth. For almost all $i$, let $K_i=k_i(X_i)$ and $K=\prod_{i\in I}K_i/U$. Let $x_i\in X_i(K_i)$ be the generic point of $X_i$. We claim that $x=\ulim x_i\in X(K)$ is above the generic point of $X$. Indeed, let $U\subseteq X$ is any nonempty Zariski open subset. We can write $U=\ulim U_i$, where $U_i\subseteq X_i$ is a Zariski open subset which is nonempty for almost all $i$. Since $x_i\in U_i(K_i)$, we get that $x\in U(K)$. It follows that $x$ is above the generic point of $X$. 
By (ii), we have that $f$ is smooth at $x$ if and only if $f_i$ is smooth at $x_i$ for almost all $i$. The conclusion follows.
%It suffices to show that $f:X(K)\to Y(K)$ is smooth at the generic point if and only if the same is true for $f_i:X_i(K_i)\to Y_i(K_i)$ for almost all $i$. This follows from the Jacobian criterion for smoothness and \L o\'s' theorem. 
%we see that the map $f_i:V_i\to Y_i$ is smooth for almost all $i\in I$. It follows that the map $f_i:X_i\to Y_i$ is separable for almost all $i$. 
%Suppose that $f:X\to Y$ is separable. Then, there exists a non-empty Zariski open subset $V\subseteq X$ such that $f:V \to Y$ is smooth. Write $V=\ulim V_i$, where $V_i\subseteq X_i$. By Lemma \ref{ultraprodvar}, we get that $f_i:V_i\to Y_i$ is smooth for almost all $i$, hence $f_i:X_i\to Y_i$ is separable for almost all $i$. 
%PROPOSED PROOF: SEPARABLE IF AND ONLY IF THERE EXISTS A SMOOTH FIBER.
%Using the Jacobian criterion and \L o\'s, we see that the map $f_i:V_i\to Y_i$ is smooth for almost all $i\in I$. It follows that the map $f_i:X_i\to Y_i$ is separable for almost all $i$. 
%CONVERSE (THOUGH NOT NEEDED.)
%Assume that $f$ is dominant. We then have
\end{proof}

\subsection{Rational connectedness and ultralimits}
%Duesler-Knecht
Part (i) below is stated in Proposition 17 \cite{duesler1} only in the smooth projective case. Moreover, while it is not explicitly stated in Proposition 17 \textit{ibid}, it is assumed in the proof that almost all the $X_i$ have the same Hilbert polynomial. Part (ii) is as in \cite{duesler1} and  is only stated for completeness, it will not be used anywhere in the paper. 
%Note also that Duesler-Knecht \cite{duesler1} assume that almost all $X_i$ have the same Hilbert polynomial but 
%The proof is also substantially different.
%We prove the statement holds in general. 
\bp  [cf. Proposition 17 \cite{duesler1}]\label{limitofRC}
Let $I$ be an index set, $U$ be an ultrafilter on $I$ and $k_i$ be a field for each $i\in I$. Let $k=\prod_{i\in I} k_i/ U$ and $X$ be a quasi-projective separably rationally connected $k$-variety. Let $(X_i)_{i\in I}$ be a sequence of $k_i$-schemes with $\ulim X_i=X$. Then: 
\begin{enumerate}[label=(\roman*)]
\item If $X$ is separably rationally connected, then $X_i$ is separably rationally connected for almost all $i$. 

\item If almost all $X_i$ have the same Hilbert polynomial and are smooth projective and separably rationally connected, then the same is true for $X$. 

\end{enumerate}

\ep  
\begin{proof}
%[Proof of Proposition \ref{limitofRC}]
%We can assume that each $k_i$ is algebraically closed (hence also $k$). 
(i) By Lemma \ref{ultraprodvar}, $X_i$ is a variety for almost all $i\in I$.
By assumption, there exists a $k$-variety $B$ and a map $F:B\times \mathbb{P}^1\to X$ such that
$$G:B\times \mathbb{P}^1\times \mathbb{P}^1 \to  X\times X: (b,t,t')\mapsto (F(b,t),F(b,t'))$$
is dominant and separable. After replacing $B$ with an open subscheme, if needed, 
%POSSIBLY DEBARRE'S DEFINITION B\to X\times X is dominant and separable.
%Nagata's compactification theorem, $B$ embeds into a and then compactifying $B$, we get a projective $k$-variety $B'$ birational to $B$ and rational maps $F'$ and $G'$ as above. By blowing up, we can resolve indeterminacies and make $F'$ (and $G'$) 
we can assume that $B$ is a quasi-projective variety.  
%Namely, work with Weil universe and replace B with an irreducible component and then an open dense subscheme.
%Namely, let $\eta$ be the generic point of $X\times X$ and $\eta'\in B\times \mathbb{P}^1\times \mathbb{P}^1$ be such that $G(\eta')=\eta$. Let $\eta_B=\pi_1(\eta')$.
Write $B=\ulim B_i$ and $F=\ulim F_i$. Then, we have $G=\ulim G_i$, where 
$$G_i:B_i\times \mathbb{P}^1\times \mathbb{P}^1 \to X_i\times X_i: (b,t,t')\mapsto (F_i(b,t),F_i(b,t')) $$ 
is well-defined for almost all $i\in I$. By Lemma \ref{ultradominant}, we get that $G_i$ is dominant and separable for almost all $i$. We conclude that $X_i$ is separably rationally connected for almost all $i\in I$.\\
(ii) 
%See Proposition 17 \cite{duesler1} or the proof of Theorem 11 \cite{duesler}.
%
In Proposition 17 \cite{duesler1}, Duesler-Knecht prove that $X$ is separably rationally connected by showing the existence of a very free rational curve on $X$. Alternatively, one can use the argument in the proof of Theorem 11 \cite{duesler}. Although the authors assume characteristic $0$, their argument ultimately relies on IV, Theorem 3.11 \cite{kollarbook} which is valid for separable rational connectedness and in arbitrary characteristic. 
\end{proof}
%We point out that Proposition \ref{limitofRC} has an easy proof in case $X$ is smooth and almost all $k_i$ are of character zero:
%\begin{proof}
%Since $X$ is smooth, the same is true for almost all $X_i$.  Suppose that $X_i$ is not rationally connected. Then, for almost all $i$, there are $x_i,y_i\in X_i(\overline{k_i})$, such that $x_i$ and $y_i$ are not connected via a rational curve. Let $x^*=\ulim x_i$, $y^*=\ulim y_i$ and $\overline{k}^*= \prod_{i\in I} \overline{k_i}/U$. Note that $x^*,y^* \in X(\overline{k}^*)$. Since $X$ is rationally connected, we get that $X_{\overline{k}^*}$ is rationally connected. Let $f:\mathbb{P}^1_{ \overline{k}^*} \to X(\overline{k}^*)$ be a rational curve connecting $x^*$ and $y^*$. 
%\end{proof}
%\begin{fact}[ IV. 3.3 \cite{kollarbook}]
%If $\text{char}(k)=0$, then a $k$-variety is separably rationally connected if and only if it is rationally connected.
%\end{fact}
%A crucial fact is that (separable) rational connectedness is deformation invariant in smooth families:
%\bl [Theorem 3.11 \cite{kollarbook}] \label{opencondition}
%Let $X\to S$ be a smooth morphism of schemes with $S$ connected. Write $X_s=X\times_S s$ for each $s\in S$. Then the set
%$$U=\{s\in S: X_s \mbox{ is separably rationally connected}\} $$
%is open in $X$.
%\el 

\section{Variations on the $C_1$ property} \label{c1fieldsandvariants}
%Given a language $L$, 
A class $\mathcal{C}$  of $L$-structures is called an \textit{elementary class} if it is the class of models of some $L$-theory $T$. The class of $C_1$ fields is clearly an elementary class which is $\forall \exists$-axiomatizable in $L_{\text{rings}}$. We prove that the same is true for the classes of geometrically $C_1$ fields and RC solving fields. 
%We also introduce the class of strongly
%form elementary classes which are $\forall \exists$-axiomatizable in $L_{\text{rings}}=\{+,\cdot,0,1\}$. 
%We develop a systematic model-theoretic study of these classes, thus putting the results of \cite{duesler} in a larger context.
%To some extent, the former fact is already implicit in . 
%The former fact is arguably implicit in \cite{duesler1}.
% and can even be axiomatized  by $\forall \exists$-sentences.
\subsection{Geometrically $C_1$ fields}
%\begin{definition}[Koll\'ar]
%A field $k$ is called geometrically $C_1$ if every smooth projective separably rationally connected variety over $k$ has a $k$-rational point.
%\end{definition}
% Over fields of characteristic $0$, one can assume projective.
Following a suggestion of Koll\'ar, Hogadi-Xu define geometrically $C_1$ fields in characteristic $0$ (see Definition 1.4 \cite{hogadi}). Their definition extends naturally in arbitrary characteristic:
\begin{definition}
A field $k$ is called \textit{geometrically }$C_1$ if every smooth projective separably rationally connected $k$-variety has a $k$-rational point. 
\end{definition}
\begin{rem}
\begin{enumerate}[label=(\roman*)]

\item Hogadi-Xu use proper varieties rather than projective varieties but this does not make a difference in view of Chow's lemma.

\item If $k$ is geometrically $C_1$ and large of characteristic $0$, then every rationally connected $k$-variety (not necessarily smooth or proper) has a $k$-point (see Lemma 3.2 \cite{pieropan}). 
%dense many $k$-rational points.
\end{enumerate}
\end{rem}
%\bl 
%Let $k$ be a geometrically $C_1$ field of characteristic $0$. Then: 
%\begin{enumerate}[label=(\roman*)]
%\item Every smooth proper rationally connected $k$-variety has a $k$-rational point. 
%\item 
%\end{enumerate}
%\el 
%\begin{proof}
%See Lemmas 3.1, 3.2 \cite{pieropan}.
%\end{proof}
%For large fields of characteristic $0$, one can drop smooth and proper:
%\bl [Lemma 3.2 \cite{pieropan}] \label{largelemma}
%Let $k$ be a large field of characteristic $0$ which is geometrically $C_1$. Then, every rationally connected variety has Zariski dense many $k$-rational points.
%\el
%\begin{proof}
%Note that it suffices to show that every smooth rationally connected variety over $K$ has Zariski dense $K$-rational points. 
%Let $X$ be an arbitrary rationally connected variety. By Hironaka's resolution of singularities \cite{hironaka}, we find a proper birational morphism $X'\to X$ with $X'$ smooth and proper. Since $X$ is rationally connected and $X$ is birational to $X'$, we get that $X'$ is rationally connected. By assumption, we have $X'(k)\neq \emptyset$. Since $X'$ is smooth and $k$ is large, we get that $X'(k)$ is Zariski dense in $X'$. It follows that $X(k)$ is also Zariski dense in $X$.
%\end{proof}

\bl \label{easyprops}
Let $I$ be an index set and $U$ be an ultrafilter on $I$. 
\begin{enumerate}[label=(\roman*)]

%\item Let $I$ be an directed set and $k_i$ be a geometrically $C_1$ field for each $i\in I$. Then $\varinjlim k_i$ is geometrically $C_1$. 
\item Let $k_i$ be a field which is geometrically $C_1$, for almost all $i\in I$. Then $\prod_{i\in I} k_i/ U$ is geometrically $C_1$. 

\item Let $k$ be a field. Then, $k$ is geometrically $C_1$ if and only if  $k_U$ is geometrically $C_1$.

\item Suppose that $k\equiv l$. Then $k$ is geometrically $C_1$ if and only if $l$ is geometrically $C_1$.

\end{enumerate}
\el 
\begin{proof}
%(i) Clear.\\
(i) Let $k=\prod_{i\in I} k_i/ U$ and $X\subseteq \mathbb{P}^n_k$ be a smooth projective separably rationally connected $k$-variety. Let $(X_i)_{i\in I}$ be a sequence of projective $k_i$-schemes with $\ulim X_i=X$. By Lemma \ref{ultraprodvar} and Proposition \ref{limitofRC},  for almost all $i$, the scheme $X_i$ is a smooth projective separably rationally connected $k_i$-variety. By assumption, we get that $X_i(k_i)\neq \emptyset$ for almost all $i$. By \L o\'s, it follows that $X(k)\neq \emptyset$. \\
(ii) The \quotes{only if} follows from (i). For the converse, let $X$ be a smooth projective separably rationally connected $k$-variety. By assumption, we get that $X(k_U)\neq \emptyset$. Since $k\preceq_{\exists} k_U$ in the language of rings, it follows that $X(k)\neq \emptyset$.\\
%Then, the base change $X_{k_U}=X\times_k k_U$ is a smooth projective separably rationally connected $k_U$-variety and hence \\
(iii) By the Keisler-Shelah theorem, we have $k_U\cong l_U$ for some ultrafilter $U$. We conclude from (ii).
%By (i), we get that $k_U$ is geometrically $C_1$. By (ii), we get that $l$ is geometrically $C_1$.\\
\end{proof}

\bl \label{easyprops2}

\begin{enumerate}[label=(\roman*)]
\item Let $I$ be a directed system and $k_i$ be a geometrically $C_1$ field for each $i\in I$. Then $\varinjlim k_i$ is geometrically $C_1$. 

\item Let $k$ be a geometrically $C_1$ field and $l/k$ be a separable algebraic extension. Then $l$ is geometrically $C_1$.

\item Let $(K,v)$ be a valued field of equal characteristic with residue field $k$. If $K$ is geometrically $C_1$, then $k$ is geometrically $C_1$.
\end{enumerate}
\el 
\begin{proof}
(i) Let $X$ be a smooth projective separably rationally connected $k$-variety. Then there is $i\in I$ and a $k_i$-scheme $X_i$ such that $X=X_i\times_{k_i} k$. Moreover, $X_i$ is a smooth projective separably rationally connected $k_i$-variety. By assumption, we have $X_i(k_i)\neq \emptyset$, hence $X(k)\neq \emptyset$.\\
(ii) By part (i), it suffices to prove the statement when $l/k$ is a finite separable extension. 
%We now use an
%d\'evissage 
%argument due to de Jong (see \S 3.2 \cite{grabberstarr}). 
Let $X$ be a smooth projective separably rationally connected $l$-variety. Then, the Weil restriction $\text{Res}_{l/k}(X)$ is a smooth, projective separably rationally  connected $k$-variety. Indeed, the base change $\text{Res}_{l/k}(X)\times_k l$ is a finite product of copies of $X$ and, as such, it is smooth, projective and separably rationally connected (cf. \S 3.2 \cite{grabberstarr}). 
%A product of SRC is SRC: X\times X 
By assumption, we have that $\text{Res}_{l/k}(X)(k)\neq \emptyset$, hence $X(l)\neq \emptyset$.\\
%We have a $k$-embedding $l\to k_U$, for some ultrafilter $U$. Let $V$ be a rationally connected $l$-variety. By Fact \ref{geomprop}, the variety $V_{k_U}=V\times_l k_U$ is also rationally connected. By (ii), we know that $V_{k_U}$ has Zariski dense many $k_U$-rational points. Since 
(iii) By part (ii), the henselization $K^h$ of $K$ is also geometrically $C_1$, so we can assume that $K$ is henselian. Now let $X$ be a smooth projective separably rationally connected $k$-variety. Let $k_0\subseteq k$ be the field of definition of $X$, which is finitely generated over its prime subfield. Then, there exists a lift $\iota: k_0 \to \Oo_K$, namely a ring homomorphism such that $\text{res}(\iota(x))=x$ for all $x\in k_0$. One proves the existence of such a map by first lifting a separable transcendence basis of $k_0$ over its prime subfield and then applying Hensel's Lemma (cf. \S 2.4 \cite{vdd}).  
Using $\iota$, we obtain a flat, projective $\Oo_K$-scheme $\Xx$ with special fiber $X$, namely the trivial deformation over $\Oo_K$. The generic fiber is equal to $X\times_{k_0} K$ and hence is a smooth, projective separably rationally connected $K$-variety. Since $K$ is geometrically $C_1$, there exists $P_K\in \Xx(K)$. Since $\Xx$ is proper, this extends to an integral point $P\in \Xx(\Oo_K)$. By reduction modulo $\mathfrak{m}$, we get that $P_k\in X(k)$.
%Let $X$ be a smooth projective separably rationally connected $k$-variety. Since $X$ is smooth, there is an affine open $U\subseteq X$ of the form $U=\Spec(A)$, where $k[T_1,...,T_n]\to A$ is \'etale. This lifts to an \'etale map $\Oo_K[T_1,...,T_n]\to \tilde{A}$. Set $\tilde{U}=\Spec(\tilde{A})$. By Theorem 3.11 \cite{kollarbook}, we get that the generic fiber $\tilde{U}_K$ is separably rationally connected.  \\
%and 
\end{proof}

%\bq 
%Is part (ii) also true in mixed characteristic?
%\eq 
\bp \label{geomc1elemclass}
The class of geometrically $C_1$ fields is elementary in $L_{\text{rings}}$. Moreover, it is $\forall \exists$-axiomatizable.
\ep   
\begin{proof}
The first part follows from Lemma \ref{easyprops}(i), (iii) and Theorem 4.1.12 \cite{changkeisler}. The moreover part follows from Lemma \ref{easyprops2}(i) and Corollary 3.1.9 \cite{tent}.
%It suffices to show that it is closed under ultraproducts and elementary equivalence. Finally, we verify that our class is closed under elementary equivalence. Let $k$ be geometrically $C_1$ field, $l \equiv k$ and $V$ be a rationally connected variety over $l$. For some ultrafilter $U$, we have $l\preceq_{\exists} k_U$. By the previous paragraph, we get that $V(k_U)$ is Zariski dense in $V$ and therefore the same is true for $V(l)$.
\end{proof}
%We now discuss the relation with $C_1$ fields.
%\begin{fact} [Theorem 1.2 \cite{hogadi}] \label{hogadi}
%Let $k$ be a field of characteristic $0$ and $R$ be a DVR with residue field $k$. Let $X$ be a rationally connected $K$-variety and $\Xx$ be an $R$-model. Then $\Xx_k$ contains a rationally connected $k$-subvariety.
%\end{fact}
%As an immediate consequence:

\subsection{RC solving fields}
Following Starr \S 3 \cite{starr2}, we also consider fields which admit rational points on \textit{degenerations} of rationally connected varieties. In order to make some of the proofs work, Starr needs to assume that the base DVR of the degeneration is a \textit{prime regular} DVR (defined below). This   assumption is only needed for mixed characteristic DVRs.
\begin{definition}
%[Definition 3.3 \cite{starr2}]
A DVR $(\Lambda,\mathfrak{m}_{\Lambda})$ is called \textit{prime finite} if the residue field $\Lambda/\mathfrak{m}_{\Lambda}$ is a finite extension of its prime subfield.
\end{definition}

\begin{definition} 
%[\S 3 \cite{starr2}]
Let $\phi: (\Lambda, \mathfrak{m}_{\Lambda})\to (R,\mathfrak{m}_R)$ be a morphism of DVRs. We say that $\phi$ is \textit{regular} if the following conditions hold: 
\begin{enumerate}[label=(\roman*)]

\item We have $\mathfrak{m}_R=\phi(\mathfrak{m}_{\Lambda})R$, i.e., $\phi$ is weakly unramified.

\item The extension $k/\ell$ is separable, where $k=R/\mathfrak{m}_R$ and $\ell=\Lambda/\mathfrak{m}_{\Lambda} $.

\item The extension $K/L$ is separable, where $K=\text{Frac}(R)$ and $L=\text{Frac}(\Lambda)$.

\end{enumerate}
\end{definition}

\begin{definition} \label{primeregdefn}
%[\S 3 \cite{starr2}] 
A DVR $(R,\mathfrak{m}_R)$ is called \textit{prime regular} if there exists a prime finite DVR $(\Lambda, \mathfrak{m}_{\Lambda})$ and a regular morphism $\phi:(\Lambda,\mathfrak{m}_{\Lambda})\to (R,\mathfrak{m}_R)$.
\end{definition}

\begin{definition}
%[Definition 3.8 \cite{starr2}]
Let $k$ be a field.
\begin{enumerate}[label=(\roman*)]
\item We say that $k$ is \textit{RC solving} if for every prime regular DVR $(R,\mathfrak{m}_R)$ with $R/\mathfrak{m}_R\subseteq k$ and $K=\text{Frac}(R)$ and for every flat, projective $R$-scheme $\Xx$ with $\Xx_K$ smooth and separably rationally connected, we have that $\Xx(k)\neq \emptyset$. 

\item We say that $k$ is  \textit{equal characteristic RC solving} if the above condition holds for DVRs of equal characteristic.
\end{enumerate}
\end{definition}
%Later, 
We note that the terminology \quotes{equal characteristic RC solving} is not used in \cite{starr2}. 
\bl \label{rcimpliesgeomc1}
Let $k$ be an equal characteristic RC solving field. Then: 

\begin{enumerate}[label=(\roman*)]
\item $k$ is geometrically $C_1$.
\item $k$ is $C_1$.
\end{enumerate}
\el 
\begin{proof}
(i) Let $X$ be a smooth projective separably rationally connected $k$-variety and consider the trivial deformation $\Xx\to \Spec(R)$ of $X$ over $R=k[\![t]\!]$. Since $k$ is equal characteristic RC solving, we get that $\Xx_k(k)\neq \emptyset$ and hence $X(k)\neq \emptyset$.\\
%\end{proof}
%\bl \label{rcimpliesc1}
%Let $k$ be an equal characteristic RC solving field. Then 
%\el 
%\begin{proof}
(ii) Let $X\subseteq \mathbb{P}^n_k$ be a hypersurface of degree $d\leq n$. Let $M$ be the parameter space of hypersurfaces of degree $d\leq n$ in $\mathbb{P}^n$ and $\Xx_M\to M$ be the universal family. Recall that $M$ is a smooth $\Z$-scheme isomorphic to $\mathbb{P}^N_{\Z}$ with $N={{n+d-1}\choose{n-1}}$. Let $\zeta_k: \Spec(k)\to M$ be such that $X\to \Spec(k)$ is the pullback of $\Xx_M\to M$ via $\zeta_k$. Since $M$ is a smooth $\Z$-scheme, there is $\zeta:\Spec(k[\![t]\!])\to M$ extending $\zeta_k$ and such that the image of the generic point lies in any specified Zariski open subset of $M$. Therefore, there exists a $k[\![t]\!]$-scheme $\Xx$ such that $\Xx_k=X$ and such that the generic fiber $\Xx_{k(\!(t)\!)}$ is a general hypersurface in $\mathbb{P}^n_{k(\!(t)\!)}$ of degree $d\leq n$. 
%Namely, one can lift the coefficients of the defining equation for $X$ such that they are algebraically independent.
Then, $\Xx_{k(\!(t)\!)}$ is separably rationally connected by \cite{zhu}. Since $k$ is equal characteristic RC solving, we conclude that $X(k)\neq \emptyset$.
\end{proof}

\bl [Corollary 1.5 \cite{hogadi}]\label{hogadi}
Any geometrically $C_1$ field of characteristic $0$ is RC solving. In particular, it is $C_1$.
\el 
\begin{proof}
Although Corollary 1.5 \cite{hogadi} refers only to the $C_1$ property, its proof directly generalizes: Let $k$ be a geometrically $C_1$ field of characteristic $0$. Let $R$ be a DVR with residue field $k$ and $K=\text{Frac}(R)$. Let $\Xx$ be a flat, projective $R$-scheme $\Xx$ with $\Xx_K$ smooth and rationally connected. 
%We can assume that $R$ is complete, so $R=k[\![t]\!]$. 
%For each $n\in \N$, let $R_n=k[\![t^{1/n}]\!]$ and $K_n=\text{Frac}(R_n)$. Let $\Xx$ be a proper, flat $R$-scheme with $\Xx_K$ smooth and rationally connected. 
By Theorem 1.2 \cite{hogadi}, % (see also Th\'eor\`eme 7.15 \cite{colliot}), 
there is a $k$-subvariety $Z\subseteq \Xx_k$ which is rationally connected. Then, $Z$ has a $k$-rational point since its resolution $Z'$ is smooth, projective and rationally connected and has a $k$-rational point by our
assumption on $k$. It follows that $\Xx(k)\neq \emptyset$ and hence $k$ is RC solving.
%By resolution of singularities in characteristic $0$, there exists a proper birational morphism $f:\Xx'\to \Xx$ such that $\Xx'$ is a normal crossings $R$-model of $X$. By further blowing up $\Xx'$ if necessary, we can assume that $\Xx'$ is a strict normal crossings model (see \S 1.2.4 \cite{KK2}).
%(cf. Theorem 0.1 \cite{equivariantres}), 
%Since $f$ is a birational morphism, the generic fiber $\Xx'_K$ is rationally connected. 
%By Remark 5.2.4 \cite{brownrational}, there exists an irreducible component $Y_i$ of $\Xx'_k$ such that $Y_i$ is a smooth rationally connected $k$-variety. Since $k$ is geometrically $C_1$, there is $x\in Y_i(k)$. It follows that $f(x)\in \Xx(k)$. 
%The last statement follows from Lemma \ref{rcimpliesgeomc1}(ii).
%in the proof of .
\end{proof}

\bc \label{hogadicor}
Let $k$ be a field of characteristic $0$. Then $k$ is geometrically $C_1$ field if and only if $k$ is RC solving.
%Let $k$ be field of characteristic $0$ such that every rationally connected $k$-variety has a $k$-rational point. Let $X_k$ be a $k$-scheme which is the special fiber of a proper, flat scheme $X$ over a DVR whose generic fiber is rationally connected. Then $X_k$ has a $k$-point.
\ec 
\begin{proof}
From Lemma \ref{hogadi} and Lemma \ref{rcimpliesgeomc1}.
\end{proof}

\bl \label{easylemrc}
Let $I$ be an index set and $U$ be an ultrafilter on $I$. 
\begin{enumerate}[label=(\roman*)]

%\item Let $I$ be an directed set and $k_i$ be a geometrically $C_1$ field for each $i\in I$. Then $\varinjlim k_i$ is geometrically $C_1$. 
\item Let $k_i$ be a field which is RC solving, for almost all $i\in I$. Then $\prod_{i\in I} k_i/ U$ is RC solving. 

\item Let $k$ be a field. Then, $k$ is RC solving if and only if $k_U$ is RC solving.

\item Suppose that $k\equiv l$. Then, $k$ is RC solving if and only if $l$ is RC solving.
\end{enumerate}
\el 
\begin{proof}
%(i) Clear.\\
(i) Set $k=\prod_{i\in I} k_i/ U$ and let $(R,\mathfrak{m})$ be a prime regular DVR with $R/\mathfrak{m} \subseteq k$ and $K=\text{Frac}(R)$. Let $\Xx$ be a flat, projective $R$-scheme with $\Xx_{K}$ smooth and separably rationally connected. Let $(\Lambda,\mathfrak{m}_{\Lambda})$ be a prime finite DVR such that $(\Lambda,\mathfrak{m}_{\Lambda})\to (R,\mathfrak{m}_R)$ is regular. By Lemma 3.2 \cite{starr2}, there exists a smooth parameter space $M$ over $S=\Spec(\Lambda)$, a finite type, flat, projective $M$-scheme $\Xx_M$ and a dominant $S$-morphism $\zeta: \Spec(R)\to M$ such that we have a pullback diagram 
\[
  \begin{tikzcd}
    \Xx\arrow[d] \arrow[r] & \Xx_{M}  \arrow[d,"f"]\\
   \Spec (R)   \arrow[r, "\zeta"] & M
     \end{tikzcd}
\]
Let $M_{\eta}= M\times_{\Spec(\Lambda)} \Spec(\text{Frac}(\Lambda))$ be the generic fiber.
% and $M_0=M\times_{\Spec(\Lambda)} \Spec(\Lambda/\mathfrak{m}_{\Lambda})$ be the special fiber.
By IV, Theorem 3.11 \cite{kollarbook}, there is a Zariski open subset $V\subseteq M_{\eta}$ containing $\zeta(\Spec(K))$ such that $\Xx_u$ is a smooth projective separably rationally connected variety for all $u\in V$. Since $\zeta$ is dominant, the subset $V$ is Zariski dense in $M_{\eta}$. 

Let $x=\zeta(\mathfrak{m}_R)\in M(k)$. In order to get $k_i$-points in $M$,
we need a \quotes{spreading out} argument  for the case where $\text{char}(k)=0$ and almost all $k_i$'s have positive characteristic: Since $M$ is a finite type $\Lambda$-scheme, it is defined over a finitely generated $\Z$-algebra $A\subseteq \Lambda$. We henceforth view $M$ as an $A$-scheme. Since $M$ is of finite type over $A$ (hence over $\Z$), we can apply \L o\'s' theorem to  get a sequence of points $x_i \in M(k_i)$ (unique up to $U$-equivalence). We note that $\Xx_M\times_M \Spec(k)=\ulim \Xx_M \times_M \Spec(k_i)$.
%, we get that $M(k_i Let  be  such that over $A$. We now proceed as before but with $A$ in place of $\Lambda$. Set $S=\Spec(A)$ and note that $S(k_i)\neq \emptyset$ for almost all $i$.\\

%In the following paragraph, we assume that almost all the $k_i$'s have the same characteristic. Since $\Lambda/\mathfrak{m}_{\Lambda}$ is finite over a prime field, we get embeddings $\Lambda/\mathfrak{m}_{\Lambda}\subseteq k_i$ for almost all $i$. In the last paragraph, we explain the necessary changes for the general case. 

%Since $M(k)\neq \emptyset$, there exists $x_i\in M(k_i)$ for almost all $i$. 
%Suppose that $x_i\in M_0(k_i)$ for some $i\in I$ such that $k_i$ is RC solving. 
Let $(R_i,\mathfrak{m}_{i})$ be a henselian prime regular DVR with $R_i/\mathfrak{m}_{i} = k_i$ and $K_i=\text{Frac}(R_i)$. By the implicit function theorem for henselian fields (see Theorem 9.2 \cite{greenrumely}), there is $\zeta_i: \Spec(R_i)\to M$ which specializes to $x_i$ such that $\zeta_i(\Spec(K_i))\in V$. 
%For each $i$, let $(R_i,\mathfrak{m}_{i})$ be a complete prime regular DVR with $R_i/\mathfrak{m}_{i}=k_i$ and $K_i=\text{Frac}(R_i)$. Then, for any $x_i\in M(k_i)$, there exists $P_i\in M(R_i)$ by Hensel's Lemma. Moreover, since $M\to \Spec(\Z)$ is smooth, there is $\zeta_i: \Spec(R_i)\to M$ which specializes to $x_i$ and with $\zeta_i(\Spec(K_i))\in U$. 
Therefore, $\Xx_M\times_M \Spec(K_i)$ is a smooth projective separably rationally connected $K_i$-variety. Since $k_i$ is RC solving, we get that $\Xx_{M}\times_{M} \Spec(k_i)$ has a $k_i$-rational point. 
%We conclude that, for every $x_i\in M_0(k_i)$, the $k_i$-scheme $\Xx_{M_0}\times_{M_0} \Spec(k_i)$ has a $k_i$-rational point. 
Since $\Xx_M\times_M \Spec(k)=\ulim \Xx_M \times_M \Spec(k_i)$, we get that $\Xx(k) \neq \emptyset$.\\
%This is now encoded by a $\forall \exists$-sentence $\phi$ in $L_{\text{rings}}$, namely by writing down equations for the $\Lambda/\mathfrak{m}_{\Lambda}$-scheme $M_0$ and the $M_0$-scheme $\Xx_{M_0}$. Since $\Lambda/\mathfrak{m}_{\Lambda}$ is a finite extension of a prime field, this is indeed expressible in $L_{\text{rings}}$, i.e., without parameters. Therefore, $\phi$ also holds in $k$ by \L o\'s. In particular, we get $\Xx(k)\neq \emptyset$.
%with $S=\Spec(A)$ and proceed as before. Since $S(k)\neq \emptyset$, we have that $S(k_i)\neq \emptyset$ for almost all $i$. \\
%also $\mathfrak{p}=\mathfrak{m}_{\Lambda}\cap A$. Then $A/\mathfrak{p}$ is a finitely generated subring of the number field $\Lambda/\mathfrak{m}_{\Lambda}$.  \\
%Then $\Lambda/\mathfrak{m}_{\Lambda}$ is a number field. \\
%By \L o\'s, we there exist $x_i \in M(k_i)$ for almost all $i$. Let $R_i$ be a henselian prime regular DVR with residue field $k_i$. Since $M$ is smooth, there are $P_i\in M(R_i)$ such that $P_i \in U$. In particular, we get that $\Xx_M \times x_i(k_i)\neq \emptyset$ for almost all $i$.   
(ii) The \quotes{only if} follows from (i). For the converse, let $(R,\mathfrak{m}_R)$ be a prime regular DVR with $R/\mathfrak{m}_R \subseteq k$ and $K=\text{Frac}(R)$ and $\Xx$ be a flat, projective $R$-scheme with $\Xx_{K}$ smooth and separably rationally connected. 
%We can now construct a DVR $(S,\mathfrak{m}_S)$ with $S/\mathfrak{m}_S=k_U$ and a regular morphism $(R,\mathfrak{m}_R)\to (S,\mathfrak{m}_S)$. First, since $k_U/k$ is separable, we can lift a separable transcendence basis $\tau$ to form a regular morphism $(R,\mathfrak{m}_R)\to (R',\mathfrak{m}_{R'})$ with $R'/\mathfrak{m}_{R'}=k(\tau)$. There is now an ind-\'etale (hence regular) morphism $(R', \mathfrak{m}_{R'})\to (S,\mathfrak{m}_S)$ with $S/\mathfrak{m}_S=k_U$. The composite morphism $(R,\mathfrak{m}_R)\to (S,\mathfrak{m}_S)$ is then regular, hence $(S,\mathfrak{m}_S)$
%The morphism $(R,\mathfrak{m}_R)\to (S,\mathfrak{m}_S)$ is regular. 
%is a prime regular DVR. Set $L=\text{Frac}(S)$. Now $\Xx_S=\Xx\times_R S$ is a projective, flat $S$-scheme and $\Xx_L=\Xx_K\times_K L$ is smooth and separably rationally connected. 
Since $k_U$ is RC solving, we get that $\Xx(k_U)\neq \emptyset$. Since $k\preceq_{\exists} k_U$, we conclude that $\Xx(k)\neq \emptyset$.\\
(iii) As in Lemma \ref{easyprops}(iii).
\end{proof}

\bl \label{easylemrc2}

\begin{enumerate}[label=(\roman*)]
\item Let $I$ be a directed system and $k_i$ be a RC solving field for each $i\in I$. Then $\varinjlim k_i$ is RC solving.

\item Let $(K,v)$ be a valued field of equal characteristic with perfect residue field $k$. If $K$ is geometrically $C_1$, then $k$ is equal characteristic RC solving.
\end{enumerate}
\el 
\begin{proof}
(i) Let $(R,\mathfrak{m})$ be a prime regular DVR with $R/\mathfrak{m}=k$ and $K=\text{Frac}(R)$. Let $\Xx$ be a flat, projective $R$-scheme with $\Xx_{K}$ smooth and separably rationally connected. We proceed as in Lemma \ref{easylemrc}(i). Let $M$ be a smooth parameter space over $S=\Spec(\Lambda)$ and $\zeta: \Spec(R)\to M$ be such that $\Xx\to \Spec(R)$ is the pullback of $\Xx_M\to M$ via $\zeta$. Let $V\subseteq M_{\eta}$ be a Zariski dense open subset containing $\zeta(\Spec(K))$ such that $\Xx_u$ is smooth and separably rationally connected for all $u\in V$. Since $M$ is a finite type $\Lambda$-scheme, the morphism $\zeta_0:\Spec(k)\to M$ factors as 
$$\Spec(k)\to \Spec(k_i) \stackrel{\zeta_{0,i}}\to M$$ 
for some $i\in I$. Let $R_i$ be a complete prime regular DVR with residue field $k_i$ and $K_i=\text{Frac}(R_i)$ of the same characteristic as $K$. Since $M$ is smooth, there exists $\zeta_i: \Spec(R_i)\to M$ extending $\zeta_{0,i}$ such that $\zeta_i(K_i)\in V$.
%Let $M_{\eta}= M\times_{\Spec(\Lambda)} \Spec(\text{Frac}(\Lambda))$.
%By Theorem 3.11 \cite{kollarbook}, there is an open subset 
%Since $M(k)\neq \emptyset$, there exists $x_i\in M(k_i)$ for almost all $i$. 
%Suppose $x_i\in M(k_i)$ for some $i\in I$.
%Then, there is a prime regular DVR $(R_i,\mathfrak{m}_{i})$ with $R_i/\mathfrak{m}_{i} \subseteq k_i$ and $K_i=\text{Frac}(R_i)$, and $\zeta_i: \Spec(R_i)\to M$ which specializes to $x_i$ and with $\zeta_i(\Spec(K_i))\in U$. 
%For each $i$, let $(R_i,\mathfrak{m}_{i})$ be a complete prime regular DVR with $R_i/\mathfrak{m}_{i}=k_i$ and $K_i=\text{Frac}(R_i)$. Then, for any $x_i\in M(k_i)$, there exists $P_i\in M(R_i)$ by Hensel's Lemma. Moreover, since $M\to \Spec(\Z)$ is smooth, there is $\zeta_i: \Spec(R_i)\to M$ which specializes to $x_i$ and with $\zeta_i(\Spec(K_i))\in U$. 
Therefore, $\Xx_M\times_M \Spec(K_i)$ is smooth and separably rationally connected. Since $k_i$ is RC solving, we get that $\Xx_M\times_M \Spec(k_i)$ has a $k_i$-rational point. In particular, $\Xx(k)\neq \emptyset$.\\
%Let $R'$ be prime regular DVR with $R/\mathfrak{m}_{R'}=k'$ and $K'=\text{Frac}(R')$, and $\Xx'$ be a proper, flat $R'$-scheme with $\Xx_{K'}'$ smooth and separably rationally connected. By Fact \ref{primeregparspace}, there exists a parameter space $M$ over $S=\Spec(\Z)$ and a dominant $S$-morphism $\zeta: \Spec(R')\to M$ such that $\Xx'\to \Spec(R')$ is the pullback of $\Xx_M\to M$ via $\zeta$. There is a open subset $U\subseteq M$ such that $\Xx_u$ is smooth and rationally connected. Let $x\in M(k)$. Since $x$ is smooth, there exists $P\in \Xx_M(R)$ such that $P_{\eta}\in U$. We conclude that for each $x\in M(k)$, the $k$-scheme $\Xx_M\times_M x$ has a $k$-rational point. This is encoded by a $\forall \exists$-sentence and therefore also holds in $k'$. 
(ii) By Lemma \ref{easyprops2}(ii), we may replace $(K,v)$ with any separable algebraic extension with the same residue field.
%, in particular a field-theoretic complement of $K^{ur}$ (see \S 5.2 \cite{engprest}). 
We can therefore assume that $(K,v)$ is algebraically maximal with divisible value group. 
%See \cite{engprest}.
Let $(S,\mathfrak{m}_S)$ be an equal characteristic DVR with $S/\mathfrak{m}_S=k$ and $L=\text{Frac}(S)$. Let $\Xx$ be a flat, projective $S$-scheme with $\Xx_{L}$ smooth and separably rationally connected. Arguing as above, there is an extension $K'$ of $L$ which is algebraically maximal with divisible value group and residue field $k$. By Theorem 1.4 \cite{Kuhl}, we get that $K'\equiv K$ in $L_{\text{rings}}$. Since $K$ is geometrically $C_1$, the same is true for $K'$ by Proposition \ref{geomc1elemclass}. 
%By assumption, we get that $L\equiv_{\forall \exists} k$ and hence $L$ is geometrically $C_1$. 
Therefore, $\Xx(K')\neq \emptyset$. By the valuative criterion of properness, we get $\Xx(\Oo_{K'})\neq \emptyset$ and hence $\Xx(k)\neq \emptyset$.
\end{proof}

\bp \label{rcsolvelemclass}
The class of RC solving fields is elementary in $L_{\text{rings}}$. Moreover, it is $\forall \exists$-axiomatizable.
\ep  
\begin{proof}
As in Proposition \ref{geomc1elemclass}, using Lemmas \ref{easylemrc} and \ref{easylemrc2}.
\end{proof}
%The following is due to 

%We note that---prior to Hogadi-Xu---de Jong had proved a version of the above Corollary for fields $k$ containing $\overline{\Q}$ (cf. Theorem 1.4 \cite{starr}). 
%However, we can prove the following which is sufficient for many applications:
\bl \label{geomc1impliesrcstrong}
Let $k$ be a geometrically $C_1$ field such that $k\equiv_{\forall \exists} k(\!(t^{\Gamma})\!)$ in $L_{\text{rings}}$ for some ordered abelian group $\Gamma$. Then $k$ is equal characteristic RC solving. In particular, $k$ is $C_1$.
\el 
\begin{proof}
By Proposition \ref{geomc1elemclass}, the field $k(\!(t^{\Gamma})\!)$ is geometrically $C_1$. By Lemma \ref{easylemrc2}(ii), it follows that $k$ is equal characteristic RC solving.
%Let $R$ be a an equal characteristic DVR with $K=\text{Frac}(R)$ and residue field $k$. Let $\Xx$ be a proper, flat $R$-scheme $\Xx$ with $\Xx_K$ smooth and separably rationally connected. Let $L$ be a maximal totally ramified extension of $K$ and note that $L  \equiv_{\forall \exists} k(\!(t^{\Gamma})\!)$ by Fact \ref{tame}. 
%By assumption, we get that $L\equiv_{\forall \exists} k$ and hence $L$ is geometrically $C_1$. Therefore, $\Xx(L)\neq \emptyset$. By the valuative criterion of properness, we get $\Xx(\Oo_L)\neq \emptyset$. By reduction modulo $\mathfrak{m}_L$, we get that $\Xx(k)\neq \emptyset$.
%$X\subseteq \mathbb{P}^n_k$ be a hypersurface of degree $d\leq n$. Then, there exists a $k[\![t]\!]$-scheme $\Xx$ such that $\Xx_k=X$ and such that the generic fiber $\Xx_{k(\!(t)\!)}$ is a general hypersurface in $\mathbb{P}^n_{k(\!(t)\!)}$ of degree $d\leq n$. 
%Namely, one can lift the coefficients of the defining equation for $X$ such that they are algebraically independent.
%Then $\Xx_{k(\!(t)\!)}$ is separably rationally connected by \cite{zhu}. Since $k\equiv_{\forall \exists} k(\!(t^{\Gamma})\!)$, we get that $k(\!(t^{\Gamma})\!)$ is geometrically $C_1$ by Lemma \ref{geomc1elemclass}. Therefore, there exists $x \in \Xx(k(\!(t^{\Gamma})\!))$ and also $P\in \Xx(k[\![t^{\Gamma}]\!])$. Finally, we get that $P_k\in X(k)$.
\end{proof}

\bc\label{geomc1impliesrc}
Let $(K,v)$ be a nontrivially valued tame field with divisible value group. Suppose that $K$ is geometrically $C_1$. Then $K$ is equal characteristic RC solving. In particular, $K$ is geometrically $C_1$ and also $C_1$.
\ec
\begin{proof}
By model-completeness of the theory of divisible ordered abelian groups, we get that $\Gamma \preceq \Gamma \oplus_{lex} \Q$. By Theorem 1.4 \cite{Kuhl}, it follows that $(K,v) \preceq (K(\!(t^{\Q})\!),v_t\circ v) $. In particular, $K\equiv K(\!(t^{\Q})\!)$ in $L_{\text{rings}}$. Finally, apply Lemma \ref{geomc1impliesrcstrong} to the valued field $(K(\!(t^{\Q})\!),v_t)$ to conclude that $K$ is equal characteristic RC solving. The last statement follows from Lemma \ref{rcimpliesgeomc1}.
\end{proof}
\begin{rem}
It is not known in general if geometrically $C_1$ fields of positive characteristic are $C_1$. Notably, perfect PAC fields of positive characteristic are clearly geometrically $C_1$ but it is not known whether they are also $C_1$.
\end{rem}

\begin{fact} [Theorem 1.1 \cite{esnault2}, Theorem 1.1 \cite{esnaultxu}] \label{finitefieldsRC}
Every finite field is RC solving. 
\end{fact}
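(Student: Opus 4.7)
The plan is to outline the strategy of the two cited papers, since the full proof is substantial. Let $(R,\mathfrak{m})$ be a prime regular DVR with residue field $k\subseteq \F_q$, and $\Xx/R$ a flat projective scheme with $\Xx_K$ smooth and separably rationally connected; the goal is to produce a point in $\Xx(k)$. Since $k$ is finite (being a subfield of $\F_q$), one first completes at $\mathfrak{m}$ and reduces to the case where $R$ is a complete DVR with finite residue field.

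The next step is to invoke de Jong's alteration theorem to obtain a finite extension $R'/R$ with finite residue field $k'$ and a strictly semistable, regular, projective model $\widetilde{\Xx} \to \Xx_{R'}$ dominating the base change. The special fibre $\widetilde{\Xx}_{k'}$ is then a simple normal crossings divisor whose irreducible components are smooth projective $k'$-varieties. Using that the generic fibre is separably rationally connected, a detailed analysis carried out in \cite{esnaultxu} (combined with the Hogadi--Xu description of degenerations of rationally connected varieties) shows that at least one component $Z$ of $\widetilde{\Xx}_{k'}$, or more generally a well-chosen blow-up of a stratum, satisfies the coniveau-one hypothesis on its $\ell$-adic cohomology: namely, $H^i_{\text{\'et}}(Z_{\bar{k}'}, \Q_{\ell})$ for each $i>0$ is supported on a proper closed subvariety.

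Under this hypothesis, Esnault's theorem \cite{esnault2} applies: the Deligne--Weil bounds together with the Grothendieck--Lefschetz trace formula force $\# Z(\F_{q'}) \equiv 1 \pmod{q'}$, and in particular $Z(k')\neq \emptyset$. The final and most delicate step is to descend this $k'$-rational point on $\widetilde{\Xx}_{k'}$ back to a $k$-rational point on $\Xx$. The alteration produces only a zero-cycle of some degree $d$ on $\Xx_k$, and one must show that the index of $\Xx_k/k$ is actually $1$. The main obstacle is precisely this descent; Esnault--Xu resolve it with a careful norm and Galois-theoretic argument, leveraging the fact that any smooth projective separably rationally connected variety over a finite field already has index $1$ — itself an iterated consequence of Esnault's theorem combined with Weil restriction along the tower of finite residue extensions appearing in the alteration. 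This last combinatorial--arithmetic bookkeeping is the technical heart of \cite{esnaultxu}, and is where all of the difficulty of the statement concentrates.
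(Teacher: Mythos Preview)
The paper does not prove this fact; it is recorded as a black box with citations to \cite{esnault2} and \cite{esnaultxu}. Your proposal attempts to summarize those references, but the architecture you describe is not how the argument runs. The core of the Esnault and Esnault--Xu method is to prove the congruence $|\Xx_k(k)|\equiv 1\pmod{|k|}$ \emph{directly} on the original special fibre by a cohomological route: the separably rationally connected hypothesis on $\Xx_K$ gives coniveau $\geq 1$ on $H^i_{\text{\'et}}(\Xx_{\overline{K}},\Q_\ell)$ for $i>0$ (via the Bloch--Srinivas decomposition of the diagonal), hence divisibility of the Frobenius eigenvalues by $|k|$; this divisibility is then transported to $H^i_{\text{\'et}}(\Xx_{\overline{k}},\Q_\ell)$ through nearby cycles and integrality arguments, and the trace formula delivers the congruence. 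There is no step of the form ``find a $k'$-point on a component of an alteration and then descend it to $\Xx_k$'', and the appeal to Hogadi--Xu is out of place---that result is specific to characteristic~$0$ and plays no role in \cite{esnaultxu}.

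Your proposed descent also has a gap on its own terms. You conclude by saying one must show that the index of $\Xx_k/k$ is $1$, but index~$1$ only produces a zero-cycle of degree~$1$, not a $k$-rational point; over a finite field these are genuinely different conditions. So even granting every earlier step of your outline, the final paragraph as written could not deliver $\Xx_k(k)\neq\emptyset$.
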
 
\bc 
Algebraic extensions of finite fields and pseudofinite fields are RC solving. %and pseudofinite fields.
\ec
\begin{proof}
From Fact \ref{finitefieldsRC} and Proposition \ref{rcsolvelemclass}.
\end{proof}
%\begin{fact} [Theorem 1.1 \cite{esnault2}, Theorem 1.1 \cite{esnaultxu}] 
 %Every finite field is RC solving.
%Let $(K,v)$ be a discrete valued field with finite residue field $k$. Let $X$ be a smooth, projective, geometrically irreducible $K$-variety. Suppose that the $\ell$-adic cohomology $H^i(\overline{X})$ is supported in codimension $\geq 1$ for all $i\geq 1$. Let $\Xx$ be an $\Oo_K$-model of $X$. Then $\Xx(k)\neq \emptyset$.
 %Assume that the $\ell$-adic cohomology $H_{\text{\'et}}^i(\overline{X})$ is supported in codimension $i\geq 1$. 
 %Let $\Xx$ be an $R$-model of $X$. Then $\Xx_k(k)\neq \emptyset$.
%\end{fact}

%Let $X$ be a smooth, proper, rationally connected $K$-variety and $\Xx$ be an $\Oo_K$-model of $X$. Then $\Xx(k)\neq \emptyset$.

\begin{fact} [Theorem 3.10 \cite{starr2}]
Function fields of curves over algebraically closed fields are RC solving.
\end{fact}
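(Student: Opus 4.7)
The plan is to reduce to the theorem of Graber–Harris–Starr \cite{grabberstarr}, extended to positive characteristic by de Jong–Starr \cite{dejongstarr}: any proper morphism to a smooth projective curve over an algebraically closed field with separably rationally connected general fiber admits a section.

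Write $k = \kappa(C)$ with $C$ a smooth projective curve over an algebraically closed field $\ell_0$. Given a prime regular DVR $(R,\mathfrak{m}_R)$ with $R/\mathfrak{m}_R =: k_0 \subseteq k$ and $K = \text{Frac}(R)$, and a flat projective $R$-scheme $\Xx$ with $\Xx_K$ smooth and separably rationally connected, the structure map $R \to k$ factors through $k_0$, so producing a point in $\Xx(k)$ reduces to producing a $k$-point of the special fiber $\Xx_{k_0}$. By Lemma 3.2 of \cite{starr2}, prime regularity furnishes a prime finite DVR $\Lambda$, a smooth parameter space $M / \Spec(\Lambda)$, a flat projective $M$-scheme $\Xx_M$, and a dominant morphism $\zeta : \Spec(R) \to M$ with $\Xx = \zeta^*\Xx_M$. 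By IV, Theorem 3.11 of \cite{kollarbook}, there is a nonempty Zariski open $V \subseteq M_\eta := M \times_{\Spec(\Lambda)} \Spec(\text{Frac}(\Lambda))$ containing $\zeta(\Spec(K))$ over which the fibers of $\Xx_M$ are smooth and separably rationally connected, and the specialization $x := \zeta(\mathfrak{m}_R) \in M(k_0) \subseteq M(k)$ provides a $k$-point of $M$ over the closed point of $\Spec(\Lambda)$.

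Next I would use that $k = \kappa(C)$ to extend the $k$-point $x$ to an $\ell_0$-morphism $\varphi : C \to M$ (after compactifying $M$, spreading $x$ out over some open $U \subseteq C$, and using properness together with smoothness of $M$ to extend across the missing closed points while still permitting generic deformations), chosen so that the generic point of $\varphi(C)$ lies in $V$. The pullback family $\varphi^*\Xx_M \to C$ then has smooth separably rationally connected generic fiber over $\kappa(C) = k$, and by Graber–Harris–Starr / de Jong–Starr it admits a section. Specializing the section at any closed point of $C$ produces a $k$-point of $\Xx_M$, which via the pullback square yields a $k$-point of $\Xx$.

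The main obstacle is the transversality involved in choosing $\varphi$: one must arrange for $\varphi(C)$ to meet the sep.\ rat.\ connected locus $V$ generically while still passing through $x$. This is a deformation argument that exploits both the smoothness of $M$ over $\Lambda$ and the fact that $V \subseteq M_\eta$ is Zariski dense. Additional care is needed when the residue field $\Lambda/\mathfrak{m}_\Lambda$ of $\Lambda$ does not embed into $\ell_0$, in which case one first performs a preliminary base change to replace $\Lambda$ by a suitable DVR whose residue field lies inside $\ell_0$, so that a morphism from $C$ makes sense in the first place.
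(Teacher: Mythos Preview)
The paper does not give its own proof of this fact; it simply records it as a citation to Theorem~3.10 of \cite{starr2}. So the comparison is really with Starr's argument, whose structure is visible in the paper's proofs of Lemmas~\ref{easylemrc} and~\ref{easylemrc2}.

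Your outline has the right overall shape (parameter space $M$ over a prime finite $\Lambda$, openness of SRC, reduction to Graber--Harris--Starr / de Jong--Starr), but the step ``choose an $\ell_0$-morphism $\varphi:C\to M$ so that the generic point of $\varphi(C)$ lies in $V$'' cannot work as written. The composite $\Lambda\to R\to k_0\subseteq k=\kappa(C)$ kills $\mathfrak m_\Lambda$, so $k$ (and hence $\ell_0$, and hence $C$) is a $\Lambda/\mathfrak m_\Lambda$-algebra. Any $\Lambda$-morphism $\varphi:C\to M$ therefore factors through the special fiber $M_0=M\times_{\Spec(\Lambda)}\Spec(\Lambda/\mathfrak m_\Lambda)$, which is disjoint from $M_\eta\supseteq V$. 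No amount of deforming $\varphi$ inside $M_0$ will place its generic point in $V$, and there is no reason the smooth-SRC locus of $\Xx_M\to M$ should meet $M_0$ at all---that is exactly what ``RC solving'' is meant to establish.

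What Starr actually does (Lemma~3.7 of \cite{starr2}) is to \emph{lift the curve across $\Spec(\Lambda)$}: he produces a smooth $\Lambda$-scheme $B$ with integral fibers, a smooth relative $B$-curve $C_B$ with $C_B\times_B\Spec(k)\cong C$, and a $\Lambda$-morphism $z_B:C_B\to M$ extending $x$. The point is that $C_B\to\Spec(\Lambda)$ is dominant, so the generic fiber $C_L$ of $C_B$ (over $L=\text{Frac}(B)\supseteq\text{Frac}(\Lambda)$) maps into $M_\eta$, and one can arrange $z_B$ so that this image meets $V$. One then applies \cite{grabberstarr,dejongstarr} to $\Xx_{C_L}\to C_L$ to get a section over a finite extension of $L$, and specializes along the DVR $\Oo_{B,\eta_{B_0}}$ back to a section over $k(C)$. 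The missing idea in your sketch is precisely this deformation of $C$ to a curve over $\text{Frac}(\Lambda)$; without it there is no way to access the SRC locus $V$.
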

%It is instructive to review Starr's argument for what follows: 

\subsection{Strongly RC solving fields}
We also consider a stronger variant of \quotes{RC solving} in which we drop the prime regular assumption on the DVR: 
\begin{definition}
A field $k$ is \textit{strongly RC solving} if for every DVR $(R,\mathfrak{m}_R)$ with $R/\mathfrak{m}_R\subseteq k$ and $K=\text{Frac}(R)$ and for every flat, projective $R$-scheme $\Xx$ with $\Xx_K$ smooth and separably rationally connected, we have that $\Xx(k)\neq \emptyset$. 
\end{definition}
\begin{rem} \label{strongrcrem}
\begin{enumerate}[label=(\roman*)] 

\item By Lemma 3.4 \cite{starr2}, any DVR of equal characteristic is prime regular. Therefore, the above definition only makes a difference (if any) in the mixed characteristic case. 
\item Note that a DVR with finite residue field is automatically prime regular, hence algebraic extensions of finite fields are automatically strongly RC solving.

\item We do not know if function fields of curves over algebraically closed fields are strongly RC solving.

\item We do not know if strongly RC solving fields form an elementary class. 
\end{enumerate}

\end{rem}

%We also  do not know if RC solving fields are strongly RC solving.
\section{Adic spaces} \label{adicspaces}
%We recall some facts about adic spaces. 
Let $(K,v)$ be a complete valued field of rank $1$ and $X$ be a $K$-variety. One can associate to $X$ a space of valuations $X^{\ad}$, called its adic space. This space admits an alternative description as the projective limit of all formal $\Oo_K$-models of $X$. These two descriptions will allow us to shift from $\Oo_K$-models to valuations. When $(K,v)$ is tame with divisible value group, we deduce from Kuhlmann's theory that $X(K)\neq \emptyset$ provided that $\Xx(k)\neq \emptyset$ for every $\Oo_K$-model $\Xx$.
%, using .  
%\S 2 \cite{Scholze} 
\subsection{Integral models}
We briefly recall some background but refer the reader to \S 3 \cite{conradseveral} and \S 8 \cite{boschbook} for details.
Let $(K,v)$ be a complete valued field of rank $1$ (not necessarily discrete) with valuation ring $\Oo_K$ and residue field $k$. 
\begin{definition}
Let $X$ be a proper, separated $K$-scheme of finite type.
\begin{enumerate}[label=(\roman*)]
\item  An $\Oo_K$-model $\Xx$ of $X$ is a flat, proper, separated $\Oo_K$-scheme of finite presentation together with an isomorphism  of $K$-schemes $\iota: \Xx_K\to X$. 
\item A morphism of $\Oo_K$-models $f:\Xx\to \Xx'$ is a morphism of $\Oo_K$-schemes such that $f_K:\Xx_K \to \Xx_K'$ is an isomorphism compatible with the isomorphisms $\iota:\Xx_K\to X$ and $\iota':\Xx_K'\to X$. 
\end{enumerate}
\end{definition}
Let $X^{\rig}$ be the rigid-analytic space over $K$ associated to $X$. Then, we have the notion of a formal $\Oo_K$-model of $X^{\rig}$. More generally:
\begin{definition}
Let $X$ be a rigid-analytic space over $K$.
\begin{enumerate}[label=(\roman*)]
\item A formal $\Oo_K$-model of $X$ is an admissible formal $\Oo_K$-scheme $\mathfrak{X}$ together with a $K$-isomorphism $\iota: \mathfrak{X}_K\to X$. 
\item A morphism of formal $\Oo_K$-models $f:\mathfrak{X}\to \mathfrak{X}'$ is a morphism of formal $\Oo_K$-schemes such that $f_K:\mathfrak{X}_K \to \mathfrak{X}_K'$ is an isomorphism compatible with the isomorphisms $\iota: \mathfrak{X}_K\to X$ and $\iota':\mathfrak{X}_K'\to X$. 
\end{enumerate}
\end{definition}
We write $\mathcal{M}_X$ for the category of admissible formal $\Oo_K$-models of $X^{\rig}$. The category $\mathcal{M}_X$ of admissible formal $\Oo_K$-models of $X^{\rig}$ is filtered: 
\bl [\S 8.4, Lemma 4 \cite{boschbook}]
Let $X$ be a proper, separated $K$-scheme of finite type.
\begin{enumerate}[label=(\roman*)]
\item Given two formal $\Oo_K$-models $\mathfrak{X}$ and $\mathfrak{X'}$, there exists at most one morphism of formal $\Oo_K$-models $f:\mathfrak{X}\to \mathfrak{X}'$ .

\item Given two formal $\Oo_K$-models $\mathfrak{X}$ and $\mathfrak{X}'$, there exists a formal $\Oo_K$-model $\mathfrak{X}''$ and morphisms of formal $\Oo_K$-models $\mathfrak{X}''\to \mathfrak{X}$ and $\mathfrak{X}''\to \mathfrak{X}'$. 
\end{enumerate}
\el 
Given an $\Oo_K$-model $\Xx$ of $X$, we naturally obtain a formal $\Oo_K$-model of $X^{\rig}$, namely its \textit{formal completion} $\widehat{\Xx}= \varinjlim \Xx_n$, where $\Xx_n=\Xx\times_{\Oo_K} \Oo_K/\varpi^n$. 
\begin{definition}
A formal $\Oo_K$-scheme $\mathfrak{X}$ is \textit{algebraizable} if there is a finitely presented $\Oo_K$-scheme $\Xx$ and an isomorphism of formal $\Oo_K$-schemes $f: \widehat{\Xx} \to \mathfrak{X}$. 
\end{definition}
We note that the algebraizable formal $\Oo_K$-models are cofinal in $\mathcal{M}_X$:
\begin{fact} \label{cofinal}
For each formal $\Oo_K$-model $\mathfrak{X}$ of $X^{\rig}$, there exists an $\Oo_K$-model $\Xx$ of $X$ and a morphism of formal $\Oo_K$-schemes $\widehat{\Xx}\to \mathfrak{X}$.
%In particular, we have a canonical isomorphism
%$$\varprojlim_{\Xx\in \mathcal{M}_X}  \Xx_k \cong \varprojlim_{\mathfrak{X}_i \in \mathcal{M}_X} \mathfrak{X}_i$$
\end{fact}
\begin{proof}
By \S 8.4, Proposition 6 \cite{boschbook}, the formal blowups are algebraizable and are cofinal in $\mathcal{M}_X$ by \S 8.4, Lemma 4 \cite{boschbook}.
\end{proof}

Recall that $X^{\ad}$ comes equipped with two presheaves $\Oo_{X^{\ad}}$ and $\Oo_{X^{\ad}}^+$ (see \S 2 \cite{Scholze}). Given $x\in X^{\ad}$, we write $\kappa(x)$ for the residue field of $\Oo_{X^{\ad},x}$ and $\kappa(x)^+\subseteq \kappa(x)$ for the image of $\Oo_{X,x}^+$ in $\kappa(x)$, which is a valuation ring of $\kappa(x)$ extending $\Oo_K$. 
%The adic space $X^{\ad}$ associated to $X$ is isomorphic to the inverse limit of $\mathcal{M}_X$:
\begin{fact} [Raynaud]\label{adicinverselimit}
%Let $X$ be a qcqs adic space locally of finite type over $K$. By Raynaud, there exist formal $\Oo_K$-models $\mathfrak{X}$ for $X$, unique up to admissible blowup. 
Let $X$ be a separated $K$-scheme of finite type.
Then, there is a homeomorphism 
$$X^{\ad}\cong \varprojlim_{\mathfrak{X} \in \mathcal{M}_X} |\mathfrak{X}_i |$$
extending to an isomorphism of locally ringed topological spaces 
$$(X^{\ad},\Oo_{X^{\ad}}^+)\cong \varprojlim_{\mathfrak{X}_i} (\mathfrak{X}_i,\Oo_{\mathfrak{X}_i})$$ 
where the right hand side is the inverse limit in the category of locally ringed spaces.
\end{fact}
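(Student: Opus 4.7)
The plan is to construct specialization maps from $X^{\ad}$ to each formal $\Oo_K$-model, assemble them into a map to the inverse limit, and then verify separately that the resulting map is a homeomorphism and that it lifts to an isomorphism of locally ringed spaces. A point $x\in X^{\ad}$ amounts to a valuation on a residue field $\kappa(x)$ with valuation ring $\kappa(x)^+\supseteq \Oo_K$. Given $\mathfrak{X}\in \mathcal{M}_X$, I would use Fact \ref{cofinal} to reduce to the case where $\mathfrak{X}=\widehat{\Xx}$ is algebraizable; the valuative criterion of properness applied to the proper $\Oo_K$-scheme $\Xx$ then extends the map $\Spec(\kappa(x))\to X$ uniquely to $\Spec(\kappa(x)^+)\to \Xx$, and $\mathrm{sp}_{\mathfrak{X}}(x)\in |\mathfrak{X}|$ is defined as the image of the closed point. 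Functoriality in $\mathfrak{X}$ assembles these into a continuous map $\mathrm{sp}:X^{\ad}\to \varprojlim_{\mathfrak{X}\in \mathcal{M}_X} |\mathfrak{X}|$.

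To show $\mathrm{sp}$ is a homeomorphism, I would exploit the cofinality of admissible formal blowups together with the standard dictionary between rational subsets of $X^{\ad}$ and opens arising from such blowups. For injectivity, two distinct $x,x'\in X^{\ad}$ are separated by some rational subset, hence also have distinct specializations to some formal blowup. For surjectivity, a compatible system $(y_{\mathfrak{X}})_{\mathfrak{X}}$ yields a filtered system of local rings $(\Oo_{\mathfrak{X},y_{\mathfrak{X}}})_{\mathfrak{X}}$ whose colimit determines a valuation ring on a residue field of $X$ dominating $\Oo_K$; the associated valuation is then a point of $X^{\ad}$ lifting the system. Continuity is immediate from the inverse limit topology, and openness of $\mathrm{sp}$ again follows from the rational-subset/blowup correspondence, since every rational subset of $X^{\ad}$ is the preimage under some $\mathrm{sp}_{\mathfrak{X}}$ of an open of $|\mathfrak{X}|$.

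The upgrade to an isomorphism of locally ringed spaces is checked on stalks: for each $x\in X^{\ad}$ one must identify $\Oo_{X^{\ad},x}^+$ with $\varinjlim_{\mathfrak{X}} \Oo_{\mathfrak{X},\mathrm{sp}_{\mathfrak{X}}(x)}$. Working locally on an affinoid chart $\Spa(A,A^+)$, the sections of $\Oo_{X^{\ad}}^+$ over a rational subset coincide, up to integral closure and $\varpi$-adic completion, with the sections of $\Oo_{\mathfrak{X}}$ over the corresponding affine open in an appropriate admissible blowup; passing to stalks and then to the colimit over all $\mathfrak{X}$ absorbs the completion step and yields the desired identification. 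The main obstacle is exactly this sheaf-level step, namely the identification of power-bounded functions on rational subsets with structure sheaves of admissible formal blowups, which ultimately rests on Raynaud's equivalence between quasi-compact quasi-separated rigid-analytic spaces over $K$ and admissible formal $\Oo_K$-schemes localized at admissible blowups. Once that is in place, the inverse limit in the category of locally ringed spaces is computed stalk-wise, and the two descriptions coincide.
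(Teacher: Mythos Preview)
The paper does not prove this statement at all: it is recorded as a \emph{Fact} attributed to Raynaud, and the proof consists solely of the citations ``See Theorem 3 \cite{boschbook} or Theorem 2.22 \cite{Scholze}.'' So there is nothing to compare at the level of argument; you have supplied a genuine proof sketch where the paper simply invokes the literature.

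Your outline is essentially the standard one found in those references: construct specialization maps via centers of valuations, use cofinality of admissible blowups and the rational-domain/blowup dictionary to get bijectivity and openness, and compute the stalk of $\Oo_{X^{\ad}}^+$ as the filtered colimit of the local rings $\Oo_{\mathfrak{X},\mathrm{sp}_{\mathfrak{X}}(x)}$. Two small remarks. First, the detour through Fact~\ref{cofinal} and the valuative criterion for a proper algebraization $\Xx$ is unnecessary: the specialization map $\mathrm{sp}_{\mathfrak{X}}:X^{\ad}\to |\mathfrak{X}|$ exists for \emph{any} admissible formal model directly (this is how Scholze sets it up), and your route would otherwise require $X$ to be proper, whereas the Fact is stated for $X$ merely separated of finite type. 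Second, in the surjectivity step you say the colimit $\varinjlim_{\mathfrak{X}}\Oo_{\mathfrak{X},y_{\mathfrak{X}}}$ ``determines a valuation ring''; this is the substantive point (it is where Raynaud's theory does real work), and one should say explicitly that cofinality of admissible blowups in a fixed ideal forces any two elements of the colimit to become comparable under divisibility, so that the quotient by the $\varpi$-divisible elements is indeed a valuation ring. With those adjustments your sketch matches the arguments in the cited sources.
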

\begin{proof}
See Theorem 3 \cite{boschbook} or Theorem 2.22 \cite{Scholze}.
\end{proof}
\subsection{Compactness and inverse limits}
Let $(X_i,f_{ij})_{i\in I}$ be an inverse system of finite type $k$-schemes and $X=\varprojlim X_i$. We will typically be in a situation where we know that $X_i(k)\neq \emptyset$ for each $i$ but we would really like to have that $X(k)\neq \emptyset$. A standard compactness argument shows that this is possible, except that we may need to replace $k$ by an elementary extension. More generally:
%have a point in each of these schemes.  
\bl \label{gensaturatedzr}
Let $R$ be a commutative ring, $(I,\leq)$ be a directed set, $(X_i,f_{ij})_{i\in I}$ be an inverse system of finitely presented $R$-schemes and $X=\varprojlim X_i$. Let $A$ be an $R$-algebra such that $X_i(A)\neq \emptyset$ for each $i\in I$. Then, for every $(|I|+|R|)^+$-saturated elementary extension $A\preceq A^*$, we have that $X (A^*)\neq \emptyset$.
\el 
\begin{proof}
For simplicity, assume that each $X_i=\Spec(B_i)$ is affine, where $B_i$ is a finitely presented $R$-algebra, say with $n_i$ generators. The general case only involves some additional bookkeeping.
%is left to the reader. 
For each $i\in I$, we introduce a formal $n_i$-tuple of variables $x_i=(x_{i,1},...,x_{i,n_i})$. Consider the set of formulas  $p(x_{i}: i\in I )$ in $L_{\text{rings}}(R)$ in the variables $x_i$, saying that $x_i \in X_i(A)$ and that $f_{ij}(x_i)=x_j$. Given a finite subset $S\subseteq p(x_{i}: i\in I )$, there is a finite subset $I_0 \subseteq I$ such that $S$ refers only to variables $x_i$ with $i\in I_0$. Since $(X_i,f_{ij})$ is an inverse system. there is $i_0 \in I$ such that $X_{i_0}$ dominates all $X_i$ with $i\in I_0$. By assumption, there is $a_0\in X_{i_0}(A)$. This gives compatible $k$-rational points $a_i\in X_i(k)$ for $i\in I_0$, namely $a_i=f_{i_0i}(a_0)$. It follows that $p(x_{i}: i\in I )$ is a partial type and can be realized in $A^*$ by $(|I|+|R|)^+$-saturation. We conclude that $X(A^*)\neq \emptyset$. 
\end{proof}

\begin{rem}
We sketch a different proof for the case where $A$ is a finite ring and $(I,\leq )=(\N,\leq)$, which is sufficient for our results in \S \ref{maxtotloc}. In that case, the conclusion is that $X (A)\neq \emptyset$ (a finite structure has no proper elementary extensions). We prove this using K\"onig's Lemma: Consider the graph $G$ whose set of vertices is given by
$$V(G) =\bigsqcup_{i\in \N} X_i(A) $$
We draw an edge from $a_i\in X_i(A)$ to $a_j\in X_j(A)$ if $i\geq j$ and $f_{ij}(a_i)=a_j$. Consider $G\cup \{*\}$, where $*$ is a dummy element which connects precisely to the elements of $X_0(A)$. One sees that $G\cup \{*\}$ is a connected, locally finite, infinite graph and therefore has an infinite ray. Such a ray corresponds to a sequence of points $a_i\in X_i(A)$ such that $f_{ij}(a_i)=a_j$ for $i\geq j$, i.e., to an element $(a_i) \in X(A)$.
%$a\in X(A)$.
\end{rem}

\bp \label{relativezr1}
Let $(K,v)$ be a complete valued field of rank $1$ with residue field $k$. Let $X$ be a $K$-variety and suppose that $\Xx(k)\neq \emptyset$ for each $\Oo_K$-model $\Xx$ of $X$. Then there exists $x \in X^{\ad}$ such that $k$ is existentially closed in the residue field of $\kappa(x)^+$.
\ep 
\begin{proof}
To apply Lemma \ref{gensaturatedzr}, we first need to restrict to an inverse system which is cofinal in $\mathcal{M}_X$ and is indexed by a set (not a proper class). For instance, consider the admissible blowups of a fixed formal model. This collection is indexed by a set of size $|K|$. 

Let $k^*$ be a $|K|^+$-saturated elementary extension of $k$. By Lemma \ref{gensaturatedzr}, we have that $\varprojlim \Xx_i(k^*)\neq \emptyset$, where the inverse limit is taken over all $\Oo_K$-models of $X$. By Fact \ref{cofinal}, each formal $\Oo_K$-model $\mathfrak{X}$ is dominated by an algebraizable one, hence  $\varprojlim \mathfrak{X}_i(k^*)\neq \emptyset$.  
%that $\varprojlim_{\Xx\in \mathcal{M}_X}  \Xx(k^*)\neq \emptyset$. By Fact \ref{cofinal}, we also get 
This gives rise to a sequence of points
$$(x_i)\in \varprojlim_{\mathfrak{X}_i \in \mathcal{M}_X} \mathfrak{X}_i$$ 
By Fact \ref{adicinverselimit}, the sequence $(x_i)$ corresponds to some
$x \in X^{\ad}$ such that
$$ \Oo_{X^{\ad},x }^+ \cong \varinjlim \Oo_{\mathfrak{X}_i,x_i} $$
%Let $\kappa(x)$ be the residue field of $\Oo_{X^{\ad},x }$ and $\kappa(x)^+$ 
%/\mathfrak{m}_{X^{\ad},\xi}$ be the residue field of $X^{\ad}$ at $x$ .
% and $\kappa(\xi)^+$ be the image of $\Oo_{X^{\ad},\xi}^+$ in $\kappa(\xi)$. 
%Then $(\kappa(x),v_x)$ is a valued field extending $(K,v)$, with valuation
In particular, the residue field $k'$ of $\kappa(x)^+$ is equal to $\varinjlim \kappa(x_i)$. Now each $\kappa(x_i)$ embeds into $k^*$ and these embeddings are compatible with the maps of the directed system of the $\kappa(x_i)$'s. This allows us to identify $k'$ with a subfield of $k^*$. Since $k\subseteq k' \subseteq k^*$ and $k\preceq k^*$, it follows that $k\preceq_{\exists} k'$.
%This gives a semi-valuation on $\Oo_{X,x}$. Let $U=\Spec(R)$ be an affine open containing $x$. We have a semi-valuation on $R$, hence a valuation on $\kappa(x)=\text{Frac}(R/\mathfrak{p})$ where $\mathfrak{p}=\text{supp}(v_{\xi})$. The residue field is a subfield $k'$ of $\lim \kappa(x_i)$.
\end{proof}

\bc \label{relativezr}
Let $(K,v)$ be a valued field of rank $1$ with residue field $k$. Let $X$ be a $K$-variety and suppose that $\Xx(k)\neq \emptyset$ for each $\Oo_K$-model $\Xx$ of $X$. Then there exists $\xi \in X$ and a valuation $v_{\xi}$ on $\kappa(\xi)$ whose residue field $k'$ is such that $k\preceq_{\exists} k'$.
\ec
\begin{proof}
It is harmless to replace $(K,v)$ with its completion, hence we assume that $(K,v)$ is complete. Let $x \in X^{\ad}$ be as in Proposition \ref{relativezr1}. We have a natural analytification map of locally ringed spaces 
$$i: (X^{\ad}, \Oo_{X^{\ad}}) \to (X,\Oo_X) $$
Let $\xi=i(x)$ and $\kappa(\xi)\subseteq \kappa(x)$ be the induced inclusion. We let $v_{\xi}$ be the restriction of $v_{x}$ to $\kappa(\xi)$ and note that it satisfies the desired properties.
\end{proof}

\subsection{Rational points over tame fields}
%Finally, we specialize to the case of tame fields with divisible value group:
We now apply our previous results to the case of tame fields. We refer the reader to the original article by F.-V. Kuhlmann \cite{Kuhl} for relevant definitions and facts.
%algebra and model theory of tame fields was introduced and studied by  . 
\begin{definition}
A valued field $(K,v)$ is \textit{tame} if it is algebraically maximal, with perfect residue field and $p$-divisible value group, where $p$ is the characteristic exponent of the residue field. 
\end{definition}
The following is an Ax-Kochen/Ershov principle for existential closedness:
%We will need some model-theoretic results due to Kuhlmann \cite{Kuhl}:
%This generalizes the reduction from general henselian valued fields with divisible value group to Puiseux series that was stated in the introduction.
%If $\Oo_K$ is a direct limit of prime regular DVRs, then clearly $(K,v)$ is prime regular. The converse is not true in general, e.g., if $\Gamma_K$ is of rank $>1$. Nevertheless, for tame fields with divisible value group we can prove a kind of converse up to elementary equivalence. The key ingredient is the following:
%Given a valued field $(K,v)$, we write $(K^h,v^h)$ for the henselization, which is unique up to (non-unique) isomorphism.
% The following existential version will be crucial for Theorem \ref{tamemodels}:

\begin{fact}[Theorem 1.4 \cite{Kuhl}] \label{existentialake}
Let $(K,v)$ be a tame valued field and let $(K',v')/(K,v)$ be a valued field extension. Then, the following are equivalent: 
\begin{enumerate}[label=(\roman*)]
\item $(K,v)\preceq_{\exists} (K',v')$ in $L_{\text{val}}$. 

\item \label{condition2}  $k\preceq_{\exists} k'$ in $L_{\text{rings}}$ and $\Gamma\preceq_{\exists} \Gamma'$ in $L_{\text{oag}}$.
\end{enumerate}
\end{fact}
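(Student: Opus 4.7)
The direction (i)$\Rightarrow$(ii) is routine: the residue field and the value group are interpretable in $(K,v)$ by quantifier-free $L_{\text{val}}$-formulas (as quotients of the valuation ring and of $K^\times$), so every existential sentence with parameters in $k'$ (resp.\ $\Gamma'$) that holds there lifts to an existential $L_{\text{val}}$-sentence with parameters in $(K',v')$, and by (i) it then descends to $(K,v)$. For the substantive direction (ii)$\Rightarrow$(i), I use the standard equivalence that $(K,v)\preceq_{\exists}(K',v')$ iff $(K',v')$ admits a valued-field embedding over $(K,v)$ into some elementary extension $(K^*,v^*)\succeq (K,v)$. Fix such a $(K^*,v^*)$ of saturation strictly greater than $|K'|$. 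Then its residue field $k^*$ and value group $\Gamma^*$ are correspondingly saturated elementary extensions of $k$ and $\Gamma$, and using (ii) together with this saturation I fix embeddings $k'\hookrightarrow k^*$ over $k$ and $\Gamma'\hookrightarrow \Gamma^*$ over $\Gamma$.

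The remaining task is to lift these invariants to a valued-field embedding $(K',v')\hookrightarrow (K^*,v^*)$. By Zorn's lemma pick a maximal intermediate $(L,v)\subseteq (K',v')$ equipped with a compatible embedding $\phi_L:(L,v)\hookrightarrow (K^*,v^*)$; assume for contradiction that $L\neq K'$ and choose $x\in K'\setminus L$. If $v(x)\notin vL$ or $\operatorname{res}(x)\notin \operatorname{res}(L)$, then $L(x)/L$ is transcendental and non-immediate, and I extend $\phi_L$ via the Gauss valuation using the available image in $\Gamma^*$ (resp.\ $k^*$). If $L(x)/L$ is immediate and transcendental, then $x$ is the pseudo-limit of a pseudo-Cauchy sequence in $L$ with no pseudo-limit in $L$; by saturation of $(K^*,v^*)$ the $\phi_L$-image of this sequence admits a pseudo-limit $x^*\in K^*$, and sending $x\mapsto x^*$ extends $\phi_L$.

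The genuinely hard case, and the one that forces the tameness hypothesis, is when $x$ is algebraic over $L$. Here one invokes the structure theory of tame valued fields: tameness is preserved under elementary equivalence, so $(K^*,v^*)$ is tame, and every finite algebraic extension of a tame field is \emph{defectless} and uniquely determined (up to valued-field isomorphism over the base) by the induced extension of residue field and value group, being built by composing unramified and tamely ramified pieces. In residue characteristic $0$ this is classical, while in positive residue characteristic $p$ it is precisely tameness that rules out the wild (Artin--Schreier-type) defect which would otherwise obstruct the lift. Combining this uniqueness with the embeddings already chosen on $k'$ and $\Gamma'$, I find inside $(K^*,v^*)$ a copy of $L(x)$ extending $\phi_L$, contradicting the maximality of $L$. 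Hence $L=K'$ and $\phi_L$ is the desired embedding. The main obstacle throughout is of course this algebraic step: the entire apparatus of tame fields developed in \cite{Kuhl} is devoted to making it work, and the full details can be found there.
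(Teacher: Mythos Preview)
The paper does not give its own proof of this statement: it is recorded as a \emph{Fact} and attributed to Theorem~1.4 of \cite{Kuhl}. So there is no ``paper's proof'' to compare against; the question is only whether your sketch is correct.

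Your outline has the right architecture (embed into a saturated elementary extension, lift on residue field and value group, then run a Zorn/back-and-forth argument), and you correctly identify the algebraic step as the crux. But the algebraic case as you have written it has a genuine gap. You take a maximal intermediate $(L,v)$ with $K\subseteq L\subseteq K'$ admitting an embedding into $(K^*,v^*)$, then for algebraic $x\in K'\setminus L$ you appeal to the structure theory of finite extensions of \emph{tame} fields. The problem is that nothing guarantees $L$ is tame, or even henselian: tameness of $K$ and of $K^*$ says nothing about an arbitrary intermediate subfield of $K'$. So the assertion that $L(x)/L$ is defectless and ``uniquely determined by the induced extension of residue field and value group'' is not available to you at this point. (In fact, $L(x)/L$ could be an immediate algebraic extension with nontrivial defect over $L$, even though no such thing exists over $K$.) The same issue obstructs finding the correct target $\phi_L(L)(y)\subseteq K^*$: without $\phi_L(L)$ henselian, you cannot invoke the uniqueness of tame algebraic extensions.

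Kuhlmann's actual argument does not proceed by picking an arbitrary $x$ in this way. One needs either to first arrange that $L$ is henselian and has the same residue field and value group as $K'$ (so the remaining algebraic part is immediate over a tame field and hence trivial), or to use a carefully chosen valuation-transcendence basis together with the relative approximation/embedding lemmas for tame fields. Your final sentence (``the full details can be found there'') is honest, but as written the sketch does not isolate the correct reduction, and a reader following it literally would get stuck at exactly this point.
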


\begin{rem}
%
%\item Note $(K',v')$ is also assumed to be tame in Theorem 1.4 \textit{ibid}. A small argument shows that this is not needed: Any $(K',v')$ can be embedded in a tame field $(K'',v'')$ such that $k''$ is the perfect hull of $k'$ and $\Gamma''$ is the $p$-divisible hull of $\Gamma'$, where $p$ is the characteristic exponent of $k$. 
%For instance, take K'' to be a maximal immediate extension of a maxima purely wild extension of K.
%Since $k$ is perfect and $\Gamma$ is $p$-divisible, we still have that $k\preceq_{\exists} k''$ and $\Gamma\preceq_{\exists} \Gamma''$. By Theorem 1.4 \textit{ibid}, we get that $(K,v)\preceq_{\exists} (K'',v'')$ in $L_{\text{val}}$. Since $(K',v')\subseteq (K'',v'')$, we get that $(K,v)\preceq_{\exists} (K',v')$ in $L_{\text{val}}$. 
%Proposition 32 in \cite{kuhlplaces}.
%\item 
%This has the effect of making the value group $p$-divisible and 
 If in addition $\Gamma$ is divisible and nontrivial, then the condition $\Gamma\preceq_{\exists} \Gamma'$ holds automatically since nontrivial divisible ordered abelian groups are existentially closed.
%\end{rem}
%\end{enumerate}

\end{rem}

%In the case of tame fields with divisible value group, one even gets a $K$-point on $X$: 
\bt \label{tamemodels}
Let $(K,v)$ be a tame valued field with divisible value group of rank $1$ and $X$ be a proper $K$-variety. Then, the following are equivalent: 

\begin{enumerate}[label=(\roman*)] 
\item $X(K)\neq \emptyset$.

\item $\Xx(k)\neq \emptyset$, for each $\Oo_K$-model $\Xx$ of $X$. 
\end{enumerate}

\et  
\begin{proof}
(i)$\Rightarrow$(ii): This follows directly from the valuative criterion of properness and does not require any special assumptions on $(K,v)$.\\
%Let $\Xx$ be an $\Oo_K$-model of $X$. 
%By the valuative criterion of properness, we have that $\Xx(\Oo_K)\neq \emptyset$. By reduction modulo $\mathfrak{m}$, we get that $\Xx(k)\neq \emptyset$.\\
(ii)$\Rightarrow$(i): By Corollary \ref{relativezr}, there exists a scheme-theoretic point $\xi \in X$ and a valuation $v_{\xi}$ on $\kappa(\xi)$ whose residue field $k'$ is such that $k\preceq_{\exists} k'$. By Fact \ref{existentialake}, we get that $(K,v)\preceq_{\exists} (\kappa(\xi),v_{\xi})$ in $L_{\text{val}}$. In particular, we get that $K\preceq_{\exists} \kappa(\xi)$ in $L_{\text{rings}}$. Equivalently, the Zariski closure $\overline{\{\xi\}}\subseteq X$ has Zariski dense many $K$-rational points. We conclude that $X(K)\neq \emptyset$.
\end{proof}
%\bl 
%Let $k$ be a field of characteristic zero, $R=k[\![t]\!]$ and $K=\text{Frac}(R)$. Let $X$ be a smooth projective rationally conenected variety and $\Xx$ be a strict normal crossings $R$-model of $X$. Then, every irreducible component of $\Xx_k$ of multiplicity $1$ is rationally connected.
%\el  
%\begin{proof}
%Since $Y$ is smooth and $k$ is of characteristic $0$, it suffices to show that $Y$ is rationally chain connected (see Theorem 3.10 \cite{kollarbook}). By 
%We can assume that $k$ is algebraically closed. Write $f:\Xx\to R$ for the structure morphism. Let $Y$ be such an irreducible component. Since $Y$ is smooth, it suffices to show that $Y$ is rationally chain connected. Let $U\subseteq Y$ be open subset of points $Y$  which do not lie in any other irreducible component of $\Xx_k$. Then $f$ is smooth at any $x\in U$. Given $x,x'\in U$, we can lift them to $R$-points $P,P'$ of $\Xx$. Since $\Xx_K$ is rationally connected, there is $f: \mathbb{P}^1_K \to \Xx_K$ connecting $P_K$ and $P'_K$. This rational curve can break up into a chain of rational curves $C_1\cup C_2...\cup C_n$ under specialization. Each $C_i$ is contained in a single irreducible component of $\Xx_k$ and by induction, we see that each $C_i$ is contained in $Y$. 
%\end{proof}
\section{Transfer theorems }
We prove a transfer principle for \quotes{geometrically $C_1$} in equal characteristic $0$. We also prove that tame fields with divisible value group and finite residue field are geometrically $C_1$. Both of these statements are special cases of a general transfer principle in arbitrary characteristic.
%We first prove a general transfer principle in arbitrary characteristic. 
%\subsection{Rank $1$ models}
\subsection{A general transfer principle}
%We recall some basic facts from tame fields:
\begin{fact}[Theorem 1.4 \cite{Kuhl}] \label{existentialake}
Let $(K',v')/(K,v)$ be an extension of tame fields. Then, the following are equivalent: 
\begin{enumerate}[label=(\roman*)]
\item $(K,v)\preceq (K',v')$ in $L_{\text{val}}$. 

\item \label{condition2}  $k\preceq  k'$ in $L_{\text{rings}}$ and $\Gamma\preceq  \Gamma'$ in $L_{\text{oag}}$.
\end{enumerate}
\end{fact}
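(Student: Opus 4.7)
The easy direction is (i)$\Rightarrow$(ii): the residue field and the value group are both interpretable in the valued field $(K,v)$ (with the valuation ring as a distinguished predicate, or equivalently the divisibility predicate), so any elementary equivalence of valued fields descends to elementary equivalence of the residue field in $L_{\text{rings}}$ and of the value group in $L_{\text{oag}}$. Nothing tame-specific is used here.

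For (ii)$\Rightarrow$(i), the plan is to apply the standard Tarski--Vaught embedding characterization of $\preceq$: it suffices to prove that for every sufficiently saturated elementary extension $(K^*,v^*)\succeq (K,v)$, the valued field $(K',v')$ admits a $(K,v)$-embedding into $(K^*,v^*)$. Tameness is $\forall\exists$-axiomatizable (henselianity, algebraic maximality, perfectness of the residue field, $p$-divisibility of the value group), so $(K^*,v^*)$ is again tame. From (ii) together with the easy direction applied to $(K,v)\preceq (K^*,v^*)$, we obtain $k'\equiv_k k^*$ in $L_{\text{rings}}$ and $\Gamma'\equiv_\Gamma \Gamma^*$ in $L_{\text{oag}}$, which upgrades by saturation of $(K^*,v^*)$ to relative embeddings $k'\hookrightarrow k^*$ over $k$ and $\Gamma'\hookrightarrow \Gamma^*$ over $\Gamma$.

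The heart of the argument is then an embedding lemma for tame fields: if $(L,w)$ and $(M,u)$ are tame extensions of $(K,v)$ with $(M,u)$ sufficiently saturated, and if compatible relative embeddings $l\hookrightarrow m$ over $k$ and $\Delta\hookrightarrow \Upsilon$ over $\Gamma$ are given, then $(L,w)$ embeds into $(M,u)$ over $(K,v)$. I would prove this by a transfinite back-and-forth, well-ordering $L$ and adjoining elements one at a time, distinguishing three cases. The two transcendental cases are standard: for a value-transcendental element one realizes the prescribed cut in $\Upsilon$ using saturation; for a residue-transcendental element one picks a suitable residue transcendental over the current residue image in $m$ and lifts it to a unit in $\mathcal{O}_M$. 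In both cases, such a choice is forced only up to the residue-field and value-group embeddings already fixed, and the resulting extension is \emph{immediate} to recognize as matching the extension in $L$.

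The main obstacle, and the only place where full tameness really enters, is the algebraic case. Here I would invoke Kuhlmann's structure theory: algebraic maximality forbids proper immediate algebraic extensions, while perfectness of the residue field together with $p$-divisibility of the value group forbid Artin--Schreier-type defect in residue characteristic $p$. Consequently every finite algebraic extension within a tame extension is \emph{defectless}, satisfying $[L_0:K]=e\cdot f$, and is determined up to valued $K$-isomorphism by the induced residue field and value group data. Given a generator $\alpha\in L_0$ with minimal polynomial $p(X)\in K[X]$, the already-constructed compatible embeddings single out a simple factor of $p$ over the image of $K$ in $M$ whose approximate root has the correct residue and value; Hensel's lemma in the henselian field $(M,u)$ then produces an exact root and extends the embedding to $K(\alpha)$. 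Iterating these three steps through the well-ordering of $L$ yields the desired embedding and completes the proof of (i).
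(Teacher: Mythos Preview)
The paper does not prove this statement; it is recorded as a Fact with a citation to Kuhlmann's Theorem~1.4 in \cite{Kuhl}. Your outline is broadly the strategy Kuhlmann himself uses: reduce (ii)$\Rightarrow$(i) to an embedding lemma into a saturated elementary extension, then build the embedding by transfinite induction on generators.

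There is, however, a genuine gap in your case analysis. When adjoining a transcendental element $a$ to the current subfield $F$, there are three possibilities, not two: $a$ may be value-transcendental, residue-transcendental, or the simple extension $(F(a),v)/(F,v)$ may be \emph{immediate} (same residue field, same value group). You omit the immediate transcendental case, and it is precisely here---not in the algebraic step---that the full force of tameness is needed. A tame field is an algebraically maximal Kaplansky field, so by Kaplansky's uniqueness theorem the immediate extension $F(a)$ is determined up to valued $F$-isomorphism by the equivalence class of a pseudo-Cauchy sequence in $F$ having $a$ as a pseudo-limit; one then realizes a pseudo-limit of the same sequence in the saturated target and extends the embedding. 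Without this step the induction stalls at the first immediate transcendental generator. A secondary issue: your defectlessness argument in the algebraic step needs $F$ itself, not just $K$, to be tame; this requires organizing the induction so that each intermediate field is relatively algebraically closed in the ambient tame field (cf.\ Lemma~3.7 of \cite{Kuhl}).
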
 
\begin{rem}
If in addition $\Gamma$ and $\Gamma'$ are divisible, then  $\Gamma\preceq  \Gamma'$ holds automatically by model-completeness of the theory of divisible ordered abelian groups.
\end{rem}

\bl \label{unionofdvr}
Let $(K,v)$ be a nontrivially valued tame field. Then, there exists a valued subfield $(K',v')\subseteq (K,v)$ such that: 
\begin{enumerate}[label=(\roman*)]
\item $(K',v')$ is a tame field.
\item $\Gamma/\Gamma'$ is torsion-free and $k=k'$.
%$(K',v')\preceq (K,v)$ in $L_{\text{val}}$.
\item $\Oo_{K'}$ is a direct limit of DVRs. 
\end{enumerate}
If in addition $\Gamma$ is divisible, then $(K',v')\preceq (K,v)$ in $L_{\text{val}}$.
\el 
\begin{proof}
We first construct a discrete valued subfield $(K_1,v_1)\subseteq (K,v)$ with residue field $k_1$ such that $k/k_1$ is separable algebraic. Consider the prime subfield $K_0$ of $K$, equipped with the restriction $v_0$ of $v$.  Choose a separating transcendence basis $\tau=\{t_i:i\in I\}$ for $k/k_0$. Let $T_i$ be a lift of $t_i$ in $K$ and $\mathrm{T}=\{T_i:i\in I\}$. We now let $K_1=K_0(\mathrm{T})$ in case $(K,v)$ is of mixed characteristic, and $K_1=K_0(\mathrm{T} \cup \{t\})$ in case $(K,v)$ is of equal characteristic, where $t$ is any element in the maximal ideal of $K$. In each case, we endow $K_1$ with the restriction $v_1$ of $v$.
%we let $K_1=K_0(\mathrm{T})$ be equipped with the restriction $v_1$ of $v$.  If $(K,v)$ is of equal characteristic, we let $K_1=K_0(\mathrm{T} \cup \{t\})$, where $t$ is any element in the maximal ideal of $K$.
By Lemma 2.2 \cite{Kuhl}, we get that $(K_1,v_1)$ is a discrete valued field with residue field $k_0(\tau)$. 
%can be obtained by choosing a separating transcendence basis $\tau$ for $k$ over its prime subfield and 

Let $K'\subseteq K$ be the relative algebraic closure of $K_1$ in $K$. We endow $K'$ with the restriction $v'$ of $v$. By Lemma 3.7 \cite{Kuhl}, we get that $(K',v')$ satisfies (i) and (ii). It is clear that $\Oo_{K'}$ is a direct limit of DVRs. %Note that $\Gamma_L\preceq \Gamma_K$ by model-completeness of the theory of divisible ordered abelian groups. Write $(L,v)=\varinjlim (L_{i},v_{i})$, where $\Oo_{L_{i}}$ is a prime finite DVR for each $i\in I$. Let $(K',v')$ be as in Lemma \ref{elimramlemm}, so $(K',v')\equiv (K,v)$ in $L_{\text{val}}$ and $k'=k$.
If in addition $\Gamma$ is divisible, then $\Gamma'$ must also be divisible by (ii).
%Since $\Gamma/\Gamma'$ is torsion-free, we get that $\Gamma'$ is divisible. 
We then have that $\Gamma' \preceq \Gamma$ by model-completeness of DOAG. %the theory of divisible ordered abelian groups. 
By Fact \ref{existentialake}, we conclude that $(K',v') \preceq (K,v)$  in $L_{\text{val}}$. 
\end{proof}

\bt \label{mainthm1}
Let $(K,v)$ be a tame valued field with divisible value group and residue field $k$.  

\begin{enumerate}[label=(\roman*)]
\item Assume that $k$ is strongly RC solving. Then $K$ is geometrically $C_1$ and also $C_1$. 

\item Assume that $\text{char}(K)=\text{char}(k)$ and $k$ is RC solving. Then $K$ is geometrically $C_1$ and also $C_1$. 

\end{enumerate}
\et 
\begin{proof}
(i) If $v$ is the trivial valuation, the conclusion follows from Lemma \ref{rcimpliesgeomc1}. We can therefore assume that $v$ is non-trivial. By Proposition \ref{unionofdvr}, there exists $(K',v')\preceq (K,v)$ with $k'=k$ such that $\Oo_{K'}$ is a direct limit of DVRs. By Proposition \ref{geomc1elemclass}, it suffices to show that $K'$ is geometrically $C_1$. Therefore, upon replacing $K$ with $K'$, we can assume that $\Oo_K$ is a direct limit of DVRs, say $\Oo_K=\varinjlim_{i\in I} \Oo_{K_i}$. Let $X$ be a smooth projective separably rationally connected $K$-variety and $\Xx$ be an $\Oo_K$-model of $X$.  Then there exists $i \in I$ and a flat, proper $\Oo_{K_i}$-scheme $\Xx_i$ such that $\Xx=\Xx_i \times_{\Oo_{K_i}} \Oo_K$. Since $k$ is strongly RC solving, we get that $\Xx(k)\neq \emptyset$. 
%(i) Let $X$ be a smooth projective separably rationally connected $K$-variety and $\Xx$ be an $\Oo_K$-model of $X$.  Then there exists $i \in I$ and a flat, proper $\Oo_{K_i}$-scheme $\Xx_i$ such that $\Xx=\Xx_i \times_{\Oo_{K_i}} \Oo_K$. By Corollary \ref{primeregularafterfinext}, we get that $\Xx(k)\neq \emptyset$. \\
%Since $\Oo_{K_i}$ is a prime regular DVR and $k$ is RC solving, we get that $\Xx_i(k)\neq \emptyset$, hence $\Xx(k)\neq \emptyset$. 
By Theorem \ref{tamemodels}, we get that $X(K)\neq \emptyset$. It follows that $K$ is geometrically $C_1$. By Lemma \ref{geomc1impliesrc}, we get that $K$ is $C_1$.\\
(ii) By Lemma 3.4 \cite{starr2}, any DVR of equal characteristic is prime regular. One then proceeds as in part (i).
%(ii) Now let $X$ be a smooth projective separably rationally connected $K$-variety. For any $\Oo_K$-model $\Xx$ of $X$, there exists $i \in I$ and a flat, proper $\Oo_{K_i}$-scheme $\Xx_i$ such that $\Xx=\Xx_i \times_{\Oo_{K_i}} \Oo_K$. Since $\Oo_{K_i}$ is a prime regular DVR and $k$ is RC solving, we get that $\Xx_i(k)\neq \emptyset$, hence $\Xx(k)\neq \emptyset$. By Theorem \ref{tamemodels}, we get that $X(K)\neq \emptyset$. It follows that $K$ is geometrically $C_1$. By Lemma \ref{geomc1impliesrc}, we get that $K$ is $C_1$.
\end{proof}

\subsection{Equal characteristic $0$} \label{equal0sec}
\bt  
 Let $(K,v)$ be a henselian valued field of equal characteristic $0$ with divisible value group. Then:
 $$K\mbox{ is geometrically }C_1\iff k \mbox{ is geometrically }C_1 $$
 In particular, if $k$ is geometrically $C_1$, then $K$ is $C_1$.
\et  
\begin{proof}[First proof]
If $K$ is geometrically $C_1$, then $k$ is geometrically $C_1$ by Lemma \ref{easyprops2}(iii). The converse follows from Lemma \ref{hogadi} and Theorem \ref{mainthm1}. 
\end{proof}
%Let us present an alternative proof for the converse:
%\bl 
%Let $k$ be a field of characteristic zero, $R=k[\![t]\!]$ and $K=\text{Frac}(R)$. Let $X$ be a smooth projective rationally conenected variety and $\Xx$ be a strict normal crossings $R$-model of $X$. Then, every irreducible component of $\Xx_k$ of multiplicity $1$ is rationally connected.
%\el  
%\begin{proof}
%Since $Y$ is smooth and $k$ is of characteristic $0$, it suffices to show that $Y$ is rationally chain connected (see Theorem 3.10 \cite{kollarbook}). By 
%We can assume that $k$ is algebraically closed. Write $f:\Xx\to R$ for the structure morphism. Let $Y$ be such an irreducible component. Since $Y$ is smooth, it suffices to show that $Y$ is rationally chain connected. Let $U\subseteq Y$ be open subset of points $Y$  which do not lie in any other irreducible component of $\Xx_k$. Then $f$ is smooth at any $x\in U$. Given $x,x'\in U$, we can lift them to $R$-points $P,P'$ of $\Xx$. Since $\Xx_K$ is rationally connected, there is $f: \mathbb{P}^1_K \to \Xx_K$ connecting $P_K$ and $P'_K$. This rational curve can break up into a chain of rational curves $C_1\cup C_2...\cup C_n$ under specialization. Each $C_i$ is contained in a single irreducible component of $\Xx_k$ and by induction, we see that each $C_i$ is contained in $Y$. 
%\end{proof}
\begin{proof}[Second proof]
The conclusion is clear when $v$ is the trivial valuation, so assume that $v$ is non-trivial. By Ax-Kochen/Ershov in equal characteristic $0$, we get that $K\equiv \bigcup_{n\in \N} k(\!(t^{1/n})\!)$ in $L_{\text{rings}}$. By Proposition \ref{geomc1elemclass}, being geometrically $C_1$ is an elementary property and we can therefore assume that  $K= \bigcup_{n\in \N} k(\!(t^{1/n})\!)$. 
The rest of the argument follows closely the proof of Lemme 7.5 \cite{witt}. 

Let $X$ be a smooth projective rationally connected $K$-variety. For $n\in \N$, write $R_n=k[\![t^{1/n}]\!]$ and $K_n=k(\!(t^{1/n})\!)$. Without loss of generality, we can assume that $X$ is defined over $K_1$. By semistable reduction in equal characteristic $0$ (see Lemme 7.5 \cite{witt}), there is $n\in \N$ such that $X\times_K K_n$ admits a strict semistable $R_n$-model $\Xx$. 
%Using the result of Hogadi-Xu \cite{hogadi} and arguing as in , the special fiber $\Xx_k$ contains a rationally connected subvariety $Z$. 
%By Remark 5.2.4 \cite{brownrational}, there is an irreducible component $Y$ of the special fiber $\Xx_k$ which is rationally connected. 
%Since $k$ is geometrically $C_1$, there exists $x\in Y(k)$. 
By Lemma \ref{hogadi}, the field $k$ is RC solving and hence $\Xx_k(k) \neq \emptyset$. Alternatively, by Remark 5.2.4 \cite{brownrational}, there is an  irreducible component of $\Xx_k$ which is rationally connected and hence has a $k$-rational point. 

Let $x\in \Xx_k(k)$ and $Y_1,...,Y_k$ be the irreducible components of $\Xx_k$ passing through $x$. By the \'etale local description of strict semistable models (see Proposition 2.1.5 \cite{nicaise}), there is a Zariski neighborhood $U\subseteq \Xx$ containing $x$ such that the structure morphism $U\to \Spec(R)$ factors through an \'etale map of the form
\[
  \begin{tikzcd}
    U \arrow[r, "\text{\'et}"] & \Yy=\Spec(R_n[T_1,...,T_m]/(T_1\cdot...\cdot T_k-t^{1/n}) ) 
     \end{tikzcd}
\]
for some $k,n \in \N$ with $k\leq n$. Moreover, this map sends $x$ to the origin of $\mathbb{A}^m_{R_n}$. Note that $\Yy$ has an $R_{kn}$-integral point specializing to the origin, e.g., $$T_1=T_2=...=T_m=t^{1/kn}$$ 
Since $U\to \Yy$ is \'etale at $x$, this lifts to an $R_{kn}$-point of $U$ specializing to $x$ by Hensel's Lemma. We then get that $X(K_{km})\neq \emptyset$. In particular, $X(K)\neq \emptyset$.
%Arguing as in the proof of Lemme 4.6 \cite{witt}, we may assume that $x$ does not lie in any of the other irreducible components $Y_j$ of $\Xx_k$, namely by replacing $\Xx$ with its blowup at $x$ and replacing the point $x$ with an appropriate preimage in the exceptional divisor. Therefore, the morphism $\Xx\to \Spec(R_m)$ is smooth at $x$. By Hensel's Lemma, there exists $P\in \Xx(R_m)$ specializing to $x$. We then get that $X(K_m)\neq \emptyset$ and in particular $X(K)\neq \emptyset$.
%Let $f:\Xx' \to \Xx$ be the blowup of $\Xx$ at $x$. Since $\Xx$ is regular at $x$, the fiber $f^{-1}(\{x\})$ is a projective space and there exists $x'\in f^{-1}(\{x\})$ with residue field $k$, which does not lie in any other irreducible component of $\Xx'_k$.   
%and Lemma \ref{equalcharegular}. 
\end{proof}
%The divisibility of the value group is essential:
%\begin{rem}
%If $\Gamma$ is not divisible and $k$ is not algebraically closed, then $K$ is not $C_1$. To see this, let $\ell$ be such that $p \nmid vt$ for some $t\in K$. We can assume that $G_k$ is an $\ell$-group. This means that $k$ has an extension of degree $\ell^n$ for each $n\in \N$. Write $p$ in $\ell$-basis. 
%\end{rem}
%We propose the following problem which is also 
This raises the following question, which is also implicit in \cite{witt}:
\begin{problem}
Let $k$ be a $C_1$ field of characteristic $0$ and $K= \bigcup_{n\in \N} k(\!(t^{1/n})\!)$ be the Puiseux series field over $k$. Is $K$ also $C_1$?
\end{problem}
\subsection{Finite residue field} \label{maxtotloc}
\bt  \label{hendefdiv}
Let $(K,v)$ be an algebraically maximal valued field with divisible value group and residue field $k$ which is algebraic over a finite field. Then, $K$ is geometrically $C_1$ and also $C_1$.
\et   
\begin{proof}
By Fact \ref{finitefieldsRC}, we have that $k$ is strongly RC solving. 
%(see also Remark \ref{strongrcrem}). 
We conclude from Theorem \ref{mainthm1}.
%By Lemma \ref{algebraicapprox}, there exists $(K',v')\preceq (K,v)$ such that $K'$ is algebraic over a discrete valued field whose residue field is finite. We conclude from Proposition \ref{geomc1elemclass} and \ref{algebraicc1}.
\end{proof}

\bc 
Let $\Gamma$ be a divisible value group and $\F_q$ be a finite field. Then the Hahn series field $\F_q(\!(t^{\Gamma})\!)$ is geometrically $C_1$ and also $C_1$. 
\ec 
\begin{proof}
Recall that Hahn series fields are maximal  (see Corollary 4.13 \cite{vdd}). We conclude from Theorem \ref{hendefdiv}.
\end{proof}

\bc
Any maximal totally ramified extension $K$ of a non-archimedean local field is geometrically $C_1$ and also $C_1$.
\ec
\begin{proof}
Let $K$ be a maximal totally ramified extension of a non-archimedean local field $L$. Recall that $K$ is a field-theoretic complement of $L^{ur}$ and hence every finite extension of $K$ comes from a separable extension of the residue field. Equivalently, by Ostrowski's Lemma, $K$ is henselian defectless with divisible value group. We conclude from Theorem \ref{hendefdiv}.
\end{proof}
%\begin{rem}
%Note that this even holds for smooth projective varieties such that $H^i(X)=0$.
%\end{rem}
 
\begin{rem}
\begin{enumerate}[label=(\roman*)]
\item 
Let $K$ be a non-maximal totally ramified extension of a non-archimedean local field. We claim that $\text{cd}(G_K)\geq 2$, so in particular $K$ is not $C_1$ (see Corollary, pg. 80 \cite{serregal}). Indeed, 
%let $K$ be a maximal totally ramified extension of $L$  and 
consider the short exact sequence
$$1\longrightarrow G_{K^{ur}} \longrightarrow G_K \longrightarrow G_k \longrightarrow 1 $$
%We can assume that $G_K$ is an l-group by taking an l-sylow subgroup where l divides G_ K ur. 
Since $K^{ur}\subsetneq \overline{K}$, we have $G_{K^{ur}}\neq \{1\}$. Let $\ell$ be a prime dividing $|G_{K^{ur}}|$ (as a supernatural number). Let $S$ be a Sylow $\ell$-subgroup of $G_K$ and $L$ be the fixed field of $S$. It suffices to show that $\text{cd}(G_L)\geq 2$ since $G_L$ is a subgroup of $G_K$. We may thus replace $K$ with $L$ and assume that $G_K$ is an $\ell$-group to begin with. %algebraic extension of $K$ such that $G_L$ 
% and therefore $\text{cd}(G_{K^{ur}})\geq 1$. 
%Since $G_k\cong \widehat{\Z}$, we have $\text{cd}(G_k)=1$. 
By Proposition 22, pg. 28 \cite{serregal}, it follows that 
$$\text{cd}(G_K)=\text{cd}(G_{K^{ur}})+\text{cd}(G_k)\geq 1+1=2 $$
In fact, we have $\text{cd}(G_K)=2$ because $G_K$ is either a subgroup of $G_{\Q_p}$ or $G_{\F_p(\!(t)\!)}$, both of which have cohomological dimension $2$. 

\item Likewise, Lang's theorem is optimal in the sense that every non-maximal unramified extension of any local field is not $C_1$. Once again, this can be proved by noting that such a field has cohomological dimension $2$ (see Proposition 12, pg. 85 \cite{serregal}).
% (see Corollary, \S 2 \cite{Axdim}).

\item There are plenty of examples of algebraic extensions of local fields which are of cohomological dimension $1$ but not $C_1$ (see \cite{chipchakov}). 
%For a concrete example, first consider the compositum $k$ of all finite extension of $\F_5$ of degree prime to $6$. Let $L$ be the unramified extension of $\Q_5$ with residue field $k$. Arguing as in \cite{Axdim}, we see that the field $K=L(5^{1/n}:5\nmid n)$ is of cohomological dimension $1$ but is not $C_1$. 
It would be interesting to investigate to what extent the maximal unramified extension and the maximal totally ramified extensions are the only \textit{minimal} algebraic extensions of a local field which are $C_1$, i.e., which do not contain any proper subfields which are $C_1$.
\end{enumerate}
%https://www.maths.ed.ac.uk/~v1ranick/papers/bieri.pdf
%Probably also follows from Serre since p-co diemsnion depends only on the p-Sylow subgroup. 
\end{rem}

\subsubsection*{Acknowledgements} 
I wish to  thank Arno Fehm, Franz-Viktor Kuhlmann, François Loeser and Olivier Wittenberg for their comments on earlier versions of this paper. I am also grateful to Franziska Jahnke, Tom Scanlon, Micha\l \  Szachniewicz and Tingxiang Zou for several fruitful conversations, Jason Starr for pointing out his work on RC solving fields and Santai Qu for some clarifications.
%,  and  . 
\bibliographystyle{alpha}
\bibliography{references2}

\begin{thebibliography}{ELW15}

\bibitem[BF19]{brownrational}
Morgan Brown and Tyler Foster.
\newblock Rational connectivity and analytic contractibility.
\newblock {\em Journal f{\"u}r die reine und angewandte Mathematik (Crelles
  Journal)}, 2019(747):45--62, 2019.

\bibitem[Bos14]{boschbook}
Siegfried Bosch.
\newblock {\em Lectures on formal and rigid geometry}, volume 2105.
\newblock Springer, 2014.

\bibitem[Chi22]{chipchakov}
Ivan~D Chipchakov.
\newblock Fields of dimension one algebraic over a global or local field need
  not be of type c1.
\newblock {\em Journal of Number Theory}, 235:484--501, 2022.

\bibitem[CK90]{changkeisler}
C.C. Chang and H.~Jerome Keisler.
\newblock {\em Model Theory}.
\newblock North Holland, 1990.

\bibitem[Con08]{conradseveral}
Brian Conrad.
\newblock Several approaches to non-archimedean geometry.
\newblock In {\em p-adic geometry}, pages 9--63, 2008.

\bibitem[CT10]{colliot}
Jean-Louis Colliot-Th{\'e}lene.
\newblock Vari{\'e}t{\'e}s presque rationnelles, leurs points rationnels et
  leurs d{\'e}g{\'e}n{\'e}rescences.
\newblock {\em Arithmetic Geometry: Lectures given at the CIME Summer School
  held in Cetraro, Italy, September 10-15, 2007}, pages 1--44, 2010.

\bibitem[dJS03]{dejongstarr}
A.~J. de~Jong and J.~Starr.
\newblock Every rationally connected variety over the function field of a curve
  has a rational point.
\newblock {\em American Journal of Mathematics}, 125(3):567--580, 2003.

\bibitem[DK09]{duesler1}
Bradley Duesler and Amanda Knecht.
\newblock Rationally connected varieties over the maximally unramified
  extension of p-adic fields, 2009.

\bibitem[DK17]{duesler}
Bradley Duesler and Amanda Knecht.
\newblock {Rationally connected varieties over the maximally unramified
  extension of $p$-adic fields}.
\newblock {\em Rocky Mountain Journal of Mathematics}, 47(8):2605 -- 2617,
  2017.

\bibitem[ELW15]{esnaultindex}
H{\'e}l{\`e}ne Esnault, Marc Levine, and Olivier Wittenberg.
\newblock Index of varieties over henselian fields and euler characteristic of
  coherent sheaves.
\newblock {\em Journal of Algebraic Geometry}, 24(4):693--718, 2015.

\bibitem[Esn07]{esnault2}
H{\'e}l{\`e}ne Esnault.
\newblock Coniveau over $p$-adic fields and points over finite fields.
\newblock {\em Comptes rendus. Math{\'e}matique}, 345(2):73--76, 2007.

\bibitem[Esn23]{esnaultc1}
H{\'e}l{\`e}ne Esnault.
\newblock Rational points over $c_1$ fields.
\newblock {\em arXiv preprint arXiv:2308.08063}, 2023.

\bibitem[EX09]{esnaultxu}
H{\'e}l{\`e}ne Esnault and Chenyang Xu.
\newblock Congruence for rational points over finite fields and coniveau over
  local fields.
\newblock {\em Transactions of the American Mathematical Society},
  361(5):2679--2688, 2009.

\bibitem[GHS03]{grabberstarr}
Tom Graber, Joe Harris, and Jason Starr.
\newblock Families of rationally connected varieties.
\newblock {\em Journal of the American Mathematical Society}, 16(1):57--67,
  2003.

\bibitem[GPR95]{greenrumely}
Barry Green, Florian Pop, and Peter Roquette.
\newblock On \textsc{R}umely's local-global principle.
\newblock {\em Jahresbericht der Deutschen Mathematiker Vereinigung}, 97(2):43,
  1995.

\bibitem[HX09]{hogadi}
Amit Hogadi and Chenyang Xu.
\newblock Degenerations of rationally connected varieties.
\newblock {\em Transactions of the American Mathematical Society},
  361(7):3931--3949, 2009.

\bibitem[JK23]{JK}
Franziska Jahnke and Konstantinos Kartas.
\newblock Beyond the \textsc{F}ontaine-\textsc{W}intenberger theorem.
\newblock {\em arXiv preprint arXiv:2304.05881}, 2023.

\bibitem[Joh16]{johnson}
Will Johnson.
\newblock Fun with fields.
\newblock PhD thesis, UC Berkeley, available at
  \url{https://math.berkeley.edu/~willij/drafts/will-thesis.pdf}, 2016.

\bibitem[Kol13]{kollarbook}
J{\'a}nos Koll{\'a}r.
\newblock {\em Rational curves on algebraic varieties}, volume~32.
\newblock Springer Science \& Business Media, 2013.

\bibitem[Kuh16]{Kuhl}
Franz-Viktor Kuhlmann.
\newblock The algebra and model theory of tame valued fields.
\newblock {\em J. Reine Angew. Math.}, 719:1--43, 2016.

\bibitem[Lan52]{langc1}
Serge Lang.
\newblock On quasi algebraic closure.
\newblock {\em Annals of Mathematics}, pages 373--390, 1952.

\bibitem[Nic13]{nicaise}
Johannes Nicaise.
\newblock Geometric criteria for tame ramification.
\newblock {\em Mathematische Zeitschrift}, 273(3-4):839--868, 2013.

\bibitem[Pie22]{pieropan}
Marta Pieropan.
\newblock On rationally connected varieties over $c_1$ fields of characteristic
  0.
\newblock {\em Algebra \& Number Theory}, 16(8):1811--1844, 2022.

\bibitem[Sch10]{Schoutens}
Hans Schoutens.
\newblock {\em The Use of Ultraproducts in Commutative Algebra}, volume 1999.
\newblock 01 2010.

\bibitem[Sch12]{Scholze}
Peter Scholze.
\newblock Perfectoid spaces.
\newblock {\em Publ. Math. Inst. Hautes {\'E}tudes Sei. 116}, November 2012.

\bibitem[Ser79]{serregal}
Jean-Pierre Serre.
\newblock {\em Galois cohomology}.
\newblock Springer, 1979.

\bibitem[{Sta}]{sp}
The {Stacks Project Authors}.
\newblock \textit{Stacks Project}.
\newblock \url{https://stacks.math.columbia.edu}.

\bibitem[Sta17]{starr2}
Jason~Michael Starr.
\newblock Rationally simply connected varieties and pseudo algebraically closed
  fields.
\newblock {\em arXiv preprint arXiv:1704.02932}, 2017.

\bibitem[TZ12]{tent}
Katrin Tent and Martin Ziegler.
\newblock {\em A course in model theory}.
\newblock Number~40. Cambridge University Press, 2012.

\bibitem[vdD12]{vdd}
Lou van~den Dries.
\newblock {\em Lectures on the Model Theory of Valued Fields}.
\newblock Chapter 4 in the book "Model Theory in Algebra, Analysis and
  Arithmetic", Cetraro, Italy, 2012.

\bibitem[Wit15]{witt}
Olivier Wittenberg.
\newblock {Sur une conjecture de Kato et Kuzumaki concernant les hypersurfaces
  de Fano}.
\newblock {\em Duke Mathematical Journal}, 164(11):2185 -- 2211, 2015.

\bibitem[Zhu24]{zhu}
Yi~Zhu.
\newblock Fano hypersurfaces in positive characteristic.
\newblock {\em Comptes Rendus. Math\'ematique}, 362:107--115, 2024.

\end{thebibliography}
\end{document}